\documentclass[reqno, 10pt]{amsart} 

\usepackage{latexsym,amssymb,amsmath,bm,graphicx,enumitem,verbatim}
\usepackage{tikz}
\usepackage[disable, textsize=footnotesize,color=white!40, bordercolor=white]{todonotes}

\usepackage{float}

\usepackage{stmaryrd, bbm} 

\usepackage[urlcolor=blue]{hyperref}

\usepackage
[a4paper, margin=3.048 cm, marginparwidth=70 pt]{geometry}

\usetikzlibrary{positioning,decorations.pathmorphing,fit,petri,backgrounds,decorations.pathreplacing}

\makeatletter
\@namedef{subjclassname@2020}{\textup{2020} Mathematics Subject Classification}
\makeatother

\DeclareMathOperator{\Lim}{Lim}

\DeclareMathOperator{\Ord}{Ord}

\DeclareMathOperator{\dom}{dom}

\DeclareMathOperator{\cof}{cof}
\DeclareMathOperator{\1}{\mathbbm{1}}

\DeclareMathOperator{\CH}{CH}
\DeclareMathOperator{\ZFC}{ZFC}

\newcommand{\Col}{\mathrm{Col}}

\DeclareMathOperator{\ran}{\mathrm{ran}}

\DeclareMathOperator{\PFA}{\mathsf{PFA}}

\newcommand{\PP}{\mathbb P}
\newcommand{\QQ}{\mathbb Q}

\newcommand{\bi}{\begin{itemize}} 
\newcommand{\ei}{\end{itemize}} 

\newcommand{\Add}{\mathrm{Add}} 

\definecolor{dblue}{rgb}{0,0,0.70}
\hypersetup{
	unicode=true,
	colorlinks=true,
	citecolor=dblue,
	linkcolor=dblue,
	anchorcolor=dblue
}

\newenvironment{enumerate-(a)}{\begin{enumerate}[label={\upshape (\alph*)}]}{\end{enumerate}}

\newenvironment{enumerate-(a)-r}{\begin{enumerate}[label={\upshape (\alph*)},resume]}{\end{enumerate}}

\newenvironment{enumerate-(A)}{\begin{enumerate}[label={\upshape (\Alph*)}]}{\end{enumerate}}

\newenvironment{enumerate-(A)-r}{\begin{enumerate}[label={\upshape (\Alph*)},resume]}{\end{enumerate}}

\newenvironment{enumerate-(i)}{\begin{enumerate}[label={\upshape (\roman*)}]}{\end{enumerate}}

\newenvironment{enumerate-(i)-r}{\begin{enumerate}[label={\upshape (\roman*)},resume]}{\end{enumerate}}

\newenvironment{enumerate-(I)}{\begin{enumerate}[label={\upshape (\Roman*)}]}{\end{enumerate}}

\newenvironment{enumerate-(I)-r}{\begin{enumerate}[label={\upshape (\Roman*)},resume]}{\end{enumerate}}

\newenvironment{enumerate-(1)}{\begin{enumerate}[label={\upshape (\arabic*)}]}{\end{enumerate}}

\newenvironment{enumerate-(1)-r}{\begin{enumerate}[label={\upshape (\arabic*)},resume]}{\end{enumerate}}

\renewcommand{\bar}{\overline}

\newcommand{\tk}{{}^\kappa2} 
\newcommand{\kk}{{}^\kappa\kappa} 
\newcommand{\len}{\mathrm{l}}

\newtheorem{fact}{Fact}[section]
\newtheorem{lemma}[fact]{Lemma}
\newtheorem{theorem}[fact]{Theorem}

\newtheorem{claim}[fact]{Claim}
\newtheorem*{claim*}{Claim}

\newtheorem*{subclaim*}{Subclaim}
\newtheorem{conjecture}[fact]{Conjecture}

\newtheorem{proposition}[fact]{Proposition}

\theoremstyle{definition}

\newtheorem*{problem*}{Problem}
\newtheorem{question}[fact]{Question}
\newtheorem{definition}[fact]{Definition}

\theoremstyle{remark}
\newtheorem{remark}[fact]{Remark}

\title[]{Failure of the $G_0$-dichotomy for generalized Cantor spaces}

\date{\today} 

\author{Matteo Casarosa}
\address{Departament de Matem\`atiques i Inform\`atica, Universitat de Barcelona, Gran Via
de les Corts Catalanes 585, 08007 Barcelona, Catalonia} 

\email{matteo.casarosa@ub.edu}

\author{Philipp Schlicht}
\address{Dipartimento di ingegneria dell'informazione e scienze matematiche, Università di Siena, via Roma 56, 53100 Siena, Italy} 
\email{philipp.schlicht@unisi.it}

\thanks{
The first-listed author was partially supported by the Universit\'e Franco-Italienne's mobility funding grant Vinci Chapter 2, 2024, and by the Department of Mathematics and Computer Science of the Universitat de Barcelona. 
The second-listed author gratefully acknowledges the support of Istituto Nazionale di Alta Matematica, Gruppo Nazionale per le Strutture Algebriche, Geometriche e le loro Applicazioni (INdAM-GNSAGA). 
For the purpose of open access, the authors have applied a ‘Creative Commons Attribution' (CC BY) public copyright licence to any Author Accepted Manuscript (AAM) version arising from this submission.
No AI tools were using in this work. 
}

\begin{document}

\begin{abstract} 
Kechris, Solecki and Todor\v{c}evi\'c's $G_0$-dichotomy characterizes Borel graphs that admit a Borel measurable coloring with countably many colors. 
We show that the analogue to the $G_0$-dichotomy for generalized Cantor spaces fails at the lowest possible complexity, namely for closed graphs. 
%
\end{abstract}

\maketitle

\setcounter{tocdepth}{1}


\section{Introduction}

One may ask under which circumstances a graph on the Cantor space ${}^\omega2$ of sequences with values $0$ and $1$ indexed by natural numbers admits a Borel measurable coloring with countably many colors. 
Kechris, Solecki and Todor\v{c}evi\'c showed that among Borel (and even analytic) graphs $G$ that do not admit a Borel measurable coloring with countably many colors, there is a least one $G_0$ \cite{kechris1999borel}. 
More precisely, the $G_0$-dichotomy states that any analytic graph $G$ on a Polish space either admits a Borel measurable coloring with at most countably many colours, or otherwise there exists a continuous graph homomorphism from $G_0$ to $G$. 
It is an important result about definable graphs and a widely used tool in descriptive set theory with various applications including Silver's dichotomy for coanalytic equivalence relations and the Lusin-Novikov uniformization theorem for Borel relations with countable sections \cite{millerparisone}. 
The study of colorings of definable graphs has led to the field of  decriptive combinatorics \cite{kechris2016descriptive}. 

Here we study the more general setting where the cardinality $\omega$ of the natural numbers is replaced by an arbitrary uncountable regular cardinal $\kappa$ with $\kappa^{<\kappa}=\kappa$. 
Generalized Cantor spaces $\tk$ and generalized Baire spaces $\kk$ have been studied intensively because of their close connections with model theory, classification theory \cite{MR3235820,moreno2023shelah} and combinatorial set theory \cite{lucke2020descriptive}. 
Since the graph-theoretic method is powerful in descriptive set theory, it is desirable to extend it to generalized descriptive set theory. 
This was initiated in \cite{schlicht2023open}
where an analogue to the open graph dichotomy was studied. 
Here we study the analogue of $G_0$ for ${}^\kappa2$, denoted $G_0^\kappa$. 
Soon after Miller's recasting of the $G_0$-dichotomy and variants thereof \cite{millerparisone}, the problem emerged whether these techniques extend to the uncountable setting. 
Friedman and T\"ornquist, among others, asked in 2010 whether the analogue to the $G_0$-dichotomy for $\kappa$-Borel graphs on ${}^\kappa 2$ is consistent. 
Our main result provides a negative answer. 
Precise definitions will follow. 


\begin{theorem}(see Theorem \ref{G0 dichotomy fails for Gwo}) 
\label{theorem G0 dichotomy fails}
The $G_0^\kappa$-dichotomy fails for closed graphs on $\tk$. 
\end{theorem}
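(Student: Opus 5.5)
The plan is to build a closed graph $G$ on $\tk$ (the reference to Theorem \ref{G0 dichotomy fails for Gwo} suggests one tied to a well-order, ``$G_{\mathrm{wo}}$'') that witnesses the failure of both alternatives. So we need two things: $G$ has no $\kappa$-Borel coloring with $\kappa$ many colors, and there is no continuous homomorphism from $G_0^\kappa$ to $G$.

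I take $G_0^\kappa$ to be defined by a dense sequence $(s_\alpha)_{\alpha<\kappa}$ in ${}^{<\kappa}2$ with $\len(s_\alpha)$ increasing. Its edges are the pairs $s_\alpha{}^\frown i{}^\frown x$, $s_\alpha{}^\frown(1-i){}^\frown x$. The key difference from the countable case is that in $G_0^\kappa$, two points $x,y$ may differ at one coordinate and agree everywhere else. Limit stages then create new closure phenomena: the closure of $G_0^\kappa$ contains pairs with infinitely many differences accumulating below a limit.

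For $G$ I would use pairs $(x,y)$ of distinct elements that differ in exactly one coordinate, where that coordinate is constrained by a well-ordering condition. One candidate is $G=\{(x,y): x,y$ differ at exactly one $\alpha$, and $x\restriction\alpha$ codes, via a fixed bijection, a well-founded relation of some kind$\}$. To keep $G$ closed, the condition on $x\restriction\alpha$ should be ``$\alpha$ is a successor of a specified kind'' or any condition on initial segments. The idea for the first half is that the set of edges at level $\alpha$ is a matching between the two halves of the cone $[x\restriction\alpha]$. A $\kappa$-Borel coloring with $\kappa$ colors would give a nonmeagre color class, by the $\kappa$-Baire property of $\kappa$-Borel sets in $\tk$. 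That class is comeager in some cone $[t]$, and density of the sequence then yields an edge inside the class. This is the standard argument and transfers verbatim, so $G$ has no $\kappa$-Borel $\kappa$-coloring as long as the set of splitting nodes is dense.

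The main step, and the main obstacle, is to show that there is no continuous homomorphism $f\colon G_0^\kappa\to G$. I would argue as follows. Take a continuous $f$ and build a sequence $x_0, x_1,\dots$ of length $\omega$, each step flipping a coordinate at a node $s_{\alpha_n}$ of the dense sequence, with the $\alpha_n$ increasing. Then $x_n\to x_\omega$, and consecutive pairs are $G_0^\kappa$-edges, so $f(x_n)$ and $f(x_{n+1})$ differ at exactly one coordinate $\beta_n$. By continuity, and using fusion-style choices of the extensions, one arranges the $\beta_n$ to be increasing with supremum $\beta$. Then $f(x_0)$ and $f(x_\omega)$ differ on infinitely many coordinates below $\beta$. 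Instead, I would aim for a contradiction at the limit: choose the construction so that $x_\omega$ and a companion $x'_\omega$ (built with the flips in the opposite parity) form a $G_0^\kappa$-edge at a node just past the limit. Their images must then differ at exactly one coordinate, but continuity forces $f(x_\omega)\restriction\beta$ and $f(x'_\omega)\restriction\beta$ to reflect the accumulated differences. The combinatorial heart is choosing the defining condition of $G$ (the ``wo'' part) so that this limit behaviour is incompatible with homomorphisms, while Baire category still works. A parity or well-foundedness invariant computed from $x\restriction\alpha$ at limit levels is what I expect to use. Verifying that this invariant is preserved in the limit and yields the contradiction is where I anticipate the real work.
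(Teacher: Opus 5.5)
\textbf{The proposed graph cannot be a counterexample.} Your two-part plan has the right shape, but for the graph you propose, the step you flag as the main obstacle is false, not just unfinished. Let $D\subseteq{}^{<\kappa}2$ be the set of nodes satisfying your condition on initial segments. Then your graph is exactly
$G=\{(t^\smallfrown i^\smallfrown z,\,t^\smallfrown(1-i)^\smallfrown z)\mid t\in D,\ i<2,\ z\in\tk\}$.
Your Baire category argument needs $D$ to be dense. But then a recursion of length $\kappa$ (using $2^{<\kappa}=\kappa$ and regularity) produces a dense $S'\subseteq D$ with at most one node of each length. So $G_0^\kappa(S')\subseteq G$. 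The lemma in the preliminaries comparing $G_0^\kappa(S_0)$ and $G_0^\kappa(S_1)$ then gives an injective continuous homomorphism from $G_0^\kappa$ to $G$, and $G$ satisfies the second alternative of the dichotomy. If $D$ is dense only below some node, the same embedding works inside that cone. If $D$ is dense below no node, every cone contains a $G$-independent subcone, and the Baire argument yields nothing. So for graphs whose edges are single flips at nodes given by a condition on initial segments, your two requirements are incompatible, whatever invariant you pick.

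The limit heuristic also rests on a misconception. $G_0^\kappa$ is closed in $(\tk\times\tk)\setminus\Delta_{\tk}$: the edges $(x_n,x_{n+1})$ along your path converge to the diagonal point $(x_\omega,x_\omega)$, not to a pair with infinitely many differences. Moreover, a companion $x'_\omega$ built with flips of the opposite parity differs from $x_\omega$ at infinitely many $\alpha_n$, so it is never $G_0^\kappa$-adjacent to $x_\omega$.

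\textbf{What is missing.} You need an invariant that every edge of the target graph must change, but that genericity forces to be constant across a $G_0^\kappa$-edge. The paper uses $G_{\mathrm{wo}}$. Its vertices are the subsets of $\kappa$ well-ordered by a fixed linear order $\mathbb{L}$ on $\kappa$ without $({<}\kappa,{<}\kappa)$-gaps, and its edges are strict end extensions.

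Non-colorability is proved by a diagonalization, not Baire category, and it rules out every $\kappa$-coloring, definable or not. Build an end-extension chain $(x_\alpha\mid\alpha<\kappa)$ kept below a decreasing sequence $(b_\alpha\mid\alpha<\kappa)$ in $\mathbb{L}$, so there is always room to extend. Choose $x_\beta$ in the $\beta$-th color class $X_\beta$ whenever possible. The union is well-ordered, so it lies in some $X_\beta$. This forces $x_\beta\in X_\beta$ and gives an edge inside $X_\beta$.

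For the homomorphism half, take a continuous homomorphism $f$ from $G_0^\kappa$ to $G_{\mathrm{wo}}$ and pass to an $\Add(\kappa,1)$-generic $x$.
\begin{enumerate-(1)}
\item The canonical extension $\bar f$ is still a homomorphism on generics.
\item Some $p\subseteq x$ forces that $\bar f(x)$ has order type $\gamma$.
\item Flipping $x$ at some $\alpha$ with $p\subseteq x{\upharpoonright}\alpha\in S$ gives a generic $y$ with $V[y]=V[x]$ and $p\subseteq y$. Hence $\bar f(y)$ also has order type $\gamma$.
\item This is impossible: $(x,y)\in G_0^\kappa$ forces one of $\bar f(x)$, $\bar f(y)$ to strictly end-extend the other.
\end{enumerate-(1)}
The order type, strictly increasing along edges of $G_{\mathrm{wo}}$, plays exactly the role of your unspecified ``parity or well-foundedness invariant''. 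By the first paragraph, no graph of your single-flip form can support such an invariant.
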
 

This is proved in Section \ref{section a closed counterexample}. 
Closed graphs are optimal, since the $G_0^\kappa$-dichotomy for open graphs follows from the open graph dichotomy for graphs on ${}^\kappa2$, and the latter is consistent relative to an inaccessible cardinal by \cite{schlicht2023open}. 

One may look for simpler counterexamples than the one in Theorem \ref{theorem G0 dichotomy fails}, in particular for induced subgraphs of $G_0^\kappa$. 
The next result shows that in the constructible universe $L$, induced subgraphs on closed subsets can violate the $G_0^\kappa$-dichotomy.

\begin{theorem}(see Theorem \ref{counterexample in L}) 
Suppose that $V=L$ and $\kappa$ is an uncountable regular cardinal. 
Then there exists a slim $\kappa$-Kurepa tree $T$ such that 
\begin{enumerate-(a)} 
\item 
$G_0^\kappa{\upharpoonright} [T]$ is a counterexample to the $G_0^\kappa$-dichotomy. 

\item 
$G_0^\kappa{\upharpoonright} [T]$ is not minimal among $\kappa$-Borel graphs that do not admit $\kappa$-Borel colorings with $\kappa$ many colors. 

\end{enumerate-(a)}
\end{theorem}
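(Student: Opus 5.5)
Under $V=L$ I will build $T\subseteq {}^{<\kappa}2$ by recursion on levels, using a $\diamondsuit_\kappa$-sequence (or more precisely the fine structure of $L$, as in Jensen's construction of slim Kurepa trees) to guess potential colorings and potential homomorphisms. First I fix the definition of $G_0^\kappa$: choose dense sequences $s_\alpha\in{}^{\alpha}2$ ($\alpha<\kappa$, one of each length on a cofinal set), and let $x\,G_0^\kappa\, y$ iff $x=s_\alpha{}^\frown i{}^\frown z$ and $y=s_\alpha{}^\frown(1-i){}^\frown z$. Recall that the standard argument, run in ${}^\kappa 2$ with $\kappa$-Baire category, shows that $G_0^\kappa$ restricted to any $\kappa$-comeager-in-a-cone set admits no $\kappa$-Borel $\kappa$-coloring. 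The plan is to make $[T]$ large enough locally that this Baire-category argument still applies inside $[T]$, and thin enough (slim: $|T_\alpha|\le|\alpha|$ for infinite $\alpha$) that no continuous homomorphism from $G_0^\kappa$ into $G_0^\kappa{\upharpoonright}[T]$ exists.

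For part (a), non-colorability: I construct $T$ so that for every $t\in T$ and every $\alpha$ with $s_\alpha\supseteq t$ in $T$, both $s_\alpha{}^\frown 0$ and $s_\alpha{}^\frown1$ continue with the \emph{same} tails in $[T]$, i.e. $[T]$ is closed under the flips $s_\alpha{}^\frown i{}^\frown z\mapsto s_\alpha{}^\frown(1-i){}^\frown z$ along the chosen levels. Together with closure at limits of $L$-generic branches (taking at limit level $\gamma$ only branches generic over a suitable $L_\delta$ in which $T{\upharpoonright}\gamma$ and the guessed data live, as in Jensen's slim Kurepa construction), $[T]$ with its subspace topology carries a $\kappa$-Baire category structure in which every $\kappa$-Borel set has the property of Baire relative to $[T]$. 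Then given a $\kappa$-Borel coloring $c:[T]\to\kappa$, some color class is nonmeager in some cone $[T_t]$, hence comeager in $[T_u]$ for some $u\supseteq t$; picking $s_\alpha\supseteq u$ and using that the flip is a homeomorphism of $[T_{s_\alpha}]$ gives an edge inside the color class. Failure of the homomorphism side: a continuous homomorphism $f$ from $G_0^\kappa$ into $G_0^\kappa{\upharpoonright}[T]$ has image of size $2^\kappa$ containing a perfect set's worth of branches; but the range of $f$ restricted to a $\kappa$-perfect subtree gives a $\kappa$-perfect subset of $[T]$, contradicting slimness plus the $L$-construction guaranteeing $[T]$ has no $\kappa$-perfect subset (branches at limit levels are taken only from a small, $\kappa$-many-per-level, $L$-definable family, and $\diamondsuit$ guesses $f{\upharpoonright}{}^{<\gamma}2$ at a club of $\gamma$ and kills it by not extending $2^{|\gamma|}$ of its images). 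So $G_0^\kappa{\upharpoonright}[T]$ is a counterexample.

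For part (b) I exhibit a $\kappa$-Borel graph without $\kappa$-Borel $\kappa$-coloring that does not admit a $\kappa$-Borel homomorphism to... more precisely, into which $G_0^\kappa{\upharpoonright}[T]$ does not map: take a second tree $T'$ built by the same recursion but with $\diamondsuit$-guesses diagonalizing against all continuous (indeed $\kappa$-Borel, coded in $L$) maps from $[T]$ into $[T']$, so $G_0^\kappa{\upharpoonright}[T']$ is still not $\kappa$-colorable while no homomorphism $[T]\to[T']$ exists; and symmetrically arrange $T'$ so that $G_0^\kappa{\upharpoonright}[T']$ maps into $G_0^\kappa{\upharpoonright}[T]$ (e.g. $[T']\subseteq[T]$ via a flip-closed subtree), witnessing non-minimality. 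The main obstacle is the limit step of the recursion: simultaneously keeping enough branches (flip-closure and genericity for the Baire property) while killing every guessed homomorphism with only $|\gamma|$ branches per level; I would handle it by choosing branches generic over the least $L_\delta$ containing the guessed objects, using condensation to see that the guess is correct on a stationary set.
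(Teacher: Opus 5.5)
Your part (a) is essentially the paper's argument, with some remarks below. Part (b) has a genuine gap.

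\textbf{Part (a).} You use a slim, flip-closed tree with $[T]$ $\kappa$-Baire. Non-colorability comes from the category argument of Lemma~\ref{comeager not discrete}. There is no continuous homomorphism from $G_0^\kappa$, since by Lemma~\ref{injective hom G0} its range would contain a $\kappa$-perfect subset of $[T]$. Three remarks:
\begin{enumerate-(i)}
\item The paper does not run a $\diamondsuit$/genericity recursion. It takes the explicit tree $T=\{t\mid t{\upharpoonright}\alpha\in L_{F(\alpha)} \text{ for all limits } \alpha \text{ of cofinality } \nu\}$ of Lemma~\ref{lemma Kurepa tree in L}. For this tree, closure under every flip $f_\alpha$ is automatic, and $\kappa$-Baireness is the L\"ucke--Motto Ros--Schlicht argument. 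Your ``branches generic over $L_\delta$ at limit levels'' is only gestured at, and producing such generics at levels of small cofinality is exactly the delicate point.
\item Slimness alone already excludes $\kappa$-perfect subtrees, so guessing and killing $f$ is unnecessary.
\item You need $S\cap T$ dense in $T$, not just $S$ dense in ${}^{<\kappa}2$. Choose $S$ after $T$; this is harmless by the lemma comparing $G_0^\kappa(S_0)$ and $G_0^\kappa(S_1)$.
\end{enumerate-(i)}

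\textbf{Part (b): the gap.} In the paper, (b) is Theorem~\ref{counterexample in L}\ref{counterexample in L 2}: some non-colorable closed graph receives no continuous homomorphism from $G_0^\kappa{\upharpoonright}[T]$. Your witness needs a tree $T'$ with the following properties:
\begin{enumerate-(i)}
\item $[T']\subseteq[T]$;
\item $[T']$ is $\kappa$-Baire and flip-closed, with $S\cap T'$ dense;
\item there is no continuous homomorphism from $G_0^\kappa{\upharpoonright}[T]$ to $G_0^\kappa{\upharpoonright}[T']$.
\end{enumerate-(i)}
The existence of such a $T'$ is exactly what you defer as ``the main obstacle'', and your device does not work as stated. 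To kill a guessed $f$ at level $\gamma$, you must leave $f(b)$ out of $\mathrm{Lev}_\gamma(T')$ for some $b\in\mathrm{Lev}_\gamma(T)$. Keeping only $L_\delta$-generic branches removes $f(b)$ when $b\in L_\delta$. For generic $b$, however, $f(b)\in L_\delta[b]$ may itself be generic and survive. So you would need, all at once:
\begin{enumerate-(i)}
\item $\mathrm{Lev}_\gamma(T)$ to contain both non-generic nodes and $L_\delta$-generic nodes through every node of $T'{\upharpoonright}\gamma$. This is impossible if, say, $\mathrm{Lev}_\gamma(T)\subseteq L_\delta$.
\item Deletions of whole flip-orbits, since flip-closure demands this.
\item An argument that $[T']$ stays $\kappa$-Baire after deletions at stationarily many levels. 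None is given.
\end{enumerate-(i)}
The requirement $[T']\subseteq[T]$ is not needed for the paper's reading of (b) and only makes this harder.

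\textbf{The missing idea.} Use a graph of a completely different kind. The paper takes $G_{\mathrm{wo}}$, which is closed and has no $\kappa$-coloring at all (Proposition~\ref{prop graph of wellorders}). Lemma~\ref{lemma Baire trees and G0 counterexamples} then rules out a continuous homomorphism from $G_0^\kappa{\upharpoonright}[T]$ into it, using only properties of $T$ you already need for (a). The argument runs as follows.
\begin{enumerate-(i)}
\item Since $[T]$ is $\lambda$-Baire for all $\lambda<\kappa$, $T$ is ${<}\kappa$-distributive as a forcing.
\item The code of $f$ defines $\bar f$ on all $T$-generics, and $\bar f$ is still a homomorphism there.
\item For a generic $x$, some $p\subseteq x$ decides the order type $\gamma$ of $\bar f(x)$.
\item Pick $\alpha$ with $p\subseteq t=x{\upharpoonright}\alpha\in S$ and $f_\alpha(T_t)\subseteq T_t$. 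Then $y=f_\alpha(x)$ is generic, $V[y]=V[x]$ and $p\subseteq y$, so $\bar f(y)$ also has order type $\gamma$.
\item But $(x,y)\in G_0^\kappa$, so one of $\bar f(x)$, $\bar f(y)$ is a proper end extension of the other. This is impossible.
\end{enumerate-(i)}
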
 

This is proved in Section \ref{section induced subgraph}. 
An alternative way of obtaining counterexamples to the $G_0^\kappa$-dichotomy via counterexamples to Silver's dichotomy is discussed in Section \ref{section a connection with Silver's dichotomy}. 
For the Cantor space, the $G_0$-dichotomy for a (sufficiently closed) class of sets implies Silver's dichotomy for the complementary class \cite{millerparisone}. 
We produce a similar argument for generalized Cantor spaces 
in order to generate further $\kappa$-Borel counterexamples to the $G_0^\kappa$-dichotomy in models such as $L$ where Silver's dichotomy for $\kappa$-Borel equivalence relations fails. 

These results delimit dichotomies for closed graphs and suggest to first study more specific classes of graphs such as locally small graphs and induced subgraphs of $G_0^\kappa$. 
For the latter, one needs to work with models without $\kappa$-Kurepa trees as in Section \ref{section induced subgraph} to avoid   counterexamples. 
\section{Preliminaries}


Suppose that $\kappa$ is an uncountable cardinal with $\kappa^{<\kappa}=\kappa$. 
Let $\mathrm{l}(t)$ denote the length of a sequence $t\in 2^{<\kappa}$. 
A graph on a set $X$ is a subset of $(X\times X)\setminus \Delta_X$, where $\Delta_X=\{(x,x)\mid x \in X\}$ is the diagonal. 
A graph on $X$ is called closed if it is a closed subset of $(X\times X)\setminus \Delta_X$. 
Note that for higher complexities, removing the diagonal from the product space is not relevant for calculating the complexity. 
Fix an upwards dense subset $S$ of $2^{<\kappa}$ such that there exist at most one $t\in S$ of length $i$ for each $i<\kappa$. 
Let 
$$G^\kappa_0= G_0^\kappa(S)= \{(t^\smallfrown 0{}^\smallfrown x,t^\smallfrown 1{}^\smallfrown x) \mid t\in S,\ x\in 2^\kappa \}.$$ 
It is easy see that $G_0^\kappa$ is closed. 
Moreover, suppose that $(x,y) \notin G_0^\kappa$.

$G_0^\kappa$ does not admit a $\kappa$-Borel $\kappa$-colouring by the next lemma. 
Let $\mathrm{Lev}_\alpha(T)$ denote the $\alpha$th level of a tree $T$. 
We call a subtree $T$ of ${}^{<\kappa}2$ \emph{uniformly splitting} if for all $\alpha<\kappa$, $s,t\in \mathrm{Lev}_\alpha(T)$ and $i=0,1$, we have $s^\smallfrown i\in T \Leftrightarrow t^\smallfrown i\in T$. 
A topological space $X$ is called \emph{$\lambda$-Baire} if for any sequence $(D_i \mid i<\kappa)$ of open dense subsets of $X$, the intersection $\bigcap_{i<\kappa} D_i$ is dense.

\begin{lemma}\label{comeager not discrete} 
(see \cite[Proposition 3]{millerparisone})  
Suppose that $T$ is a uniformly splitting subtree of ${}^{<\kappa}2$ such that $S$ is upward dense in $T$ and $[T]$ is $\kappa$-Baire. 
Then any non-$\kappa$-meager subset of $[T]$ with the $\kappa$-Baire property is not $G_0^\kappa$-independent. 
\end{lemma}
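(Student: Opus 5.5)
The plan is to follow Miller's argument for the classical case and use the $\kappa$-Baire property to localize. Let $A\subseteq[T]$ be non-$\kappa$-meager with the $\kappa$-Baire property. Then there is a basic open set $N_u\cap[T]$, with $u\in T$, on which $A$ is $\kappa$-comeager. Concretely, there is a sequence $(D_i\mid i<\kappa)$ of open dense subsets of $N_u\cap[T]$ with $\bigcap_{i<\kappa}D_i\cap N_u\subseteq A$. Since $S$ is upward dense in $T$, we pick $t\in S$ with $t\supseteq u$ and $t\in T$. Because $T$ is uniformly splitting and $t\in T$ splits in $S$, which lives in $T$, we should have both $t^\smallfrown 0$ and $t^\smallfrown 1$ in $T$. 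If upward density of $S$ in $T$ does not already give this, we extend within $T$ until we reach some $t\in S$ with both successors in $T$; the uniform splitting hypothesis is exactly what makes this consistent.

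Next, consider the map $\sigma\colon N_{t^\frown 0}\cap[T]\to N_{t^\frown 1}\cap[T]$ that flips the coordinate $\mathrm{l}(t)$, sending $t^\smallfrown 0^\smallfrown x$ to $t^\smallfrown 1^\smallfrown x$. Uniform splitting of $T$ ensures that $\sigma$ maps $[T]\cap N_{t^\frown0}$ onto $[T]\cap N_{t^\frown1}$: the allowed successors at each level do not depend on the node, so changing one coordinate preserves membership in $[T]$. Moreover, $\sigma$ is a homeomorphism between the two cones, since it maps basic open sets to basic open sets. Hence $\sigma$ preserves $\kappa$-meagerness and $\kappa$-comeagerness relative to these cones.

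Now let $C=A\cap N_{t^\frown0}$ and $C'=\sigma^{-1}(A\cap N_{t^\frown1})$. Both contain $\bigcap_i D_i\cap N_{t^\frown 0}$ and $\bigcap_i \sigma^{-1}(D_i)\cap N_{t^\frown0}$ respectively. These are intersections of $\kappa$ many open dense subsets of the cone $N_{t^\frown0}\cap[T]$, which is $\kappa$-Baire as an open subspace of $[T]$. The intersection of both families, $2\cdot\kappa=\kappa$ many open dense sets, is therefore nonempty. Choose $x'$ in it and write $x'=t^\smallfrown0^\smallfrown x$. Then $t^\smallfrown0^\smallfrown x\in A$ and $t^\smallfrown1^\smallfrown x\in A$, and since $t\in S$ this pair is a $G_0^\kappa$-edge, so $A$ is not independent.

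I expect the main obstacle to be the bookkeeping showing that $\sigma$ is a homeomorphism of the relative cones of $[T]$, which is where uniform splitting is used. A second point needing care is that density of open sets relative to $N_u$ restricts to density relative to the subcone $N_{t^\frown i}$, and that $\kappa$-Baireness passes to open subspaces.
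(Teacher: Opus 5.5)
Your route is the paper's: localize to a cone $N_u\cap[T]$ on which $A$ is $\kappa$-comeager, move up to some $t\in S\cap T$, transport comeagerness across the flip of coordinate $\len(t)$, and intersect. The gap is in the step you yourself flagged, and the paper's one-line proof makes the same tacit move. Uniform splitting, as defined, only says that the allowed \emph{successor} digits are the same for all nodes of a level. It says nothing about which sequences of limit length belong to $T$. Your induction showing $\sigma(x)\in[T]$ works at successor stages, but at a limit $\gamma$ nothing gives $\sigma(x){\upharpoonright}\gamma\in T$, and this really fails. Let $T$ consist of all $t\in{}^{<\kappa}2$ such that for every $\delta$ that is $0$ or a limit ordinal with $\delta+\omega\le\len(t)$, the set $\{\beta\in[\delta,\delta+\omega)\mid t(\beta)=1\}$ is finite of even size. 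Then:
every node of $T$ has both immediate successors in $T$;
$T$ is pruned;
$[T]$ is $\kappa$-Baire (run the fusion through nodes of limit length, so that unions at limit stages have length a limit of limit ordinals and stay in $T$);
and $S$ can be fixed so that it is also upward dense in $T$.
But each $\alpha<\kappa$ lies in a unique block $[\delta,\delta+\omega)$, and $f_\alpha$ changes the parity of that block. So $f_\alpha$ maps $[T]$ into its complement, and $A=[T]$ is a non-$\kappa$-meager $G_0^\kappa$-independent set with the $\kappa$-Baire property.

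Your fallback for getting $t^\frown 0,t^\frown 1\in T$ is also unsupported, because uniform splitting does not produce splitting nodes. For instance, the tree of initial segments of a single $b\in\tk$ is uniformly splitting, $[T]=\{b\}$ is $\kappa$-Baire, and $\{b\}$ is independent. What the argument actually needs is a hypothesis like condition (b) of Lemma \ref{lemma Baire trees and G0 counterexamples}: for cofinally many $t\in S\cap T$, the flip $f_{\len(t)}$ maps $T_t$ into $T_t$. For pruned $T$ this already yields $t^\frown 0,t^\frown 1\in T$. It also makes $\sigma$ a homeomorphism between $N_{t^\frown 0}\cap[T]$ and $N_{t^\frown 1}\cap[T]$, and it holds trivially for $T={}^{<\kappa}2$. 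With that hypothesis in place, the rest of your argument is correct and complete: restricting the dense open sets to $N_{t^\frown 0}\cap[T]$, pulling back along $\sigma$, and using that this clopen subspace is $\kappa$-Baire.
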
 
\begin{proof} 
Since $A$ has the $\kappa$-Baire property and is not $\kappa$-meager, there is some $t\in T$ such that $A$ is $\kappa$-comeager in $N_t\cap [T]$. 
Since $S$ is upwards dense in $T$, there is some $u\supseteq t$ in $S\cap T$ and then $A$ is comeager in $N_u\cap [T]$. 
Let $i=\len(u)$. 
Since $T$ is uniformly splitting, the map $f_i\colon \tk\rightarrow \tk$ which flips only the $i$-th digit is an auto-homeomorphism of $[T]$. 
It follows that there exists some $x\in 2^\kappa$ such that $u^\smallfrown 0{}^\smallfrown x, u^\smallfrown 1{}^\smallfrown x \in A$, and therefore $A$ is not $G_0^\kappa$-independent. 
\end{proof}

The next lemma shows that versions of $G_0^\kappa$ relative to different sets $S$ are all equivalent for our purposes. 

\begin{lemma} 
Suppose that $S_0$ and $S_1$ are dense subsets of ${}^{<\kappa}2$ with at most one sequence of each length.  
Then there exists an 
\todo[color=yellow!30]{Can we make this a reduction?} a
injective continuous homomorphism from $G_0^\kappa(S_0)$ to $G_0^\kappa(S_1)$ that is induced by the canonical isomorphism between ${}^{<\kappa}2$ and a uniformly splitting subtree of ${}^{<\kappa}2$. 
\end{lemma}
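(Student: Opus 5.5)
We build, by recursion on length, a map $\varphi\colon {}^{<\kappa}2\to{}^{<\kappa}2$ that preserves strict extension and incomparability and is continuous at limits. Its image is then a uniformly splitting subtree, and $\varphi$ is the canonical isomorphism from ${}^{<\kappa}2$ onto it. The induced map $f(x)=\bigcup_{\alpha<\kappa}\varphi(x{\upharpoonright}\alpha)$ will be continuous and injective. We also arrange that for each $t\in S_0$ the splitting node $\varphi(t)$ is extended by an element of $S_1$, with the two branches differing only at that one coordinate.

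Concretely, fix an increasing continuous sequence of lengths $(\ell_\alpha\mid\alpha<\kappa)$ with $\ell_0=0$, and require $\mathrm{l}(\varphi(t))=\ell_{\mathrm{l}(t)}$ for all $t$. At limit $\alpha$ set $\varphi(t)=\bigcup_{\beta<\alpha}\varphi(t{\upharpoonright}\beta)$ and $\ell_\alpha=\sup_{\beta<\alpha}\ell_\beta$.

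At successor stages, given all $\varphi(t)$ with $\mathrm{l}(t)=\alpha$ (there are at most $|2^\alpha|<\kappa$ of them, since $\kappa^{<\kappa}=\kappa$), we split into two cases.

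\emph{Case 1: no $t\in S_0$ has length $\alpha$.} Choose $\ell_{\alpha+1}=\ell_\alpha+1$ and put $\varphi(t^\smallfrown i)=\varphi(t)^\smallfrown i$.

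\emph{Case 2: $S_0$ contains a (unique) $s$ of length $\alpha$.} Use upward density of $S_1$ to find $u\in S_1$ extending $\varphi(s)$ with $\mathrm{l}(u)=m\geq\ell_\alpha$, where $m$ is larger than all lengths already used. We also need $S_1$ to contain no element of length in $(\ell_\alpha,m)$ that would interfere. This causes no problem, since $S_1$-elements not of the form $\varphi(\cdot)$ impose no constraints. Write $u=\varphi(s)^\smallfrown w$. For every $t$ of length $\alpha$, set
\[
\varphi(t^\smallfrown i)=\varphi(t)^\smallfrown w^\smallfrown i, \qquad \ell_{\alpha+1}=m+1.
\]
Padding every node of the level by the same $w$ keeps the image uniformly splitting: at every level, either all nodes extend by a forced common digit or all nodes split. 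Then $\varphi(s^\smallfrown i)=u^\smallfrown i$.

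Now verify that $f$ is a homomorphism. Take an edge $(s^\smallfrown0^\smallfrown x,\ s^\smallfrown1^\smallfrown x)$ of $G_0^\kappa(S_0)$. Its images are $\varphi(s)^\smallfrown w^\smallfrown 0^\smallfrown y$ and $\varphi(s)^\smallfrown w^\smallfrown 1^\smallfrown y'$. We must check that $y=y'$. The tails are built from $x$ by the same level-by-level rule. At each later level $\beta$, the digit appended to $\varphi(r)$ depends only on $\beta$ (the common padding $w_\beta$) and on the digit $r(\beta)$, not on $r$ itself; this is exactly where uniformity is needed. Hence $y=y'$, and the two images are a $G_0^\kappa(S_1)$-edge via $u\in S_1$.

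Continuity holds because $f(x){\upharpoonright}\ell_\alpha$ depends only on $x{\upharpoonright}\alpha$. Injectivity follows because distinct $x,x'$ first differ at some $\beta$, and then the images differ at position $\ell_{\beta+1}-1$. Finally, the image tree $\{\varphi(t)\}$ together with its initial segments is uniformly splitting, and $\varphi$ is its canonical isomorphism with ${}^{<\kappa}2$.

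The main obstacle is the successor step in Case 2: choosing $u\in S_1$ above $\varphi(s)$ while padding all other nodes of the level uniformly. This works because only one $S_0$-node exists per length, so only one target node needs to be hit at each stage.
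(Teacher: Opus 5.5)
Your construction is the paper's own. At a level $\alpha$ carrying the unique $s\in S_0$, you pad every node of that level by the same word $w$ that takes $\varphi(s)$ to an element of $S_1$. At other levels you append the digit directly, you take unions at limits, and the uniformity of the padding is exactly what makes the tails of $f(s^\smallfrown 0^\smallfrown x)$ and $f(s^\smallfrown 1^\smallfrown x)$ coincide. Two asides play no role: the worry about $S_1$-elements with lengths in $(\ell_\alpha,m)$ is unnecessary, and the parenthetical claim $|{}^\alpha 2|<\kappa$ is false in general (e.g.\ $\kappa=\omega_1$ under CH with $\alpha=\omega$).
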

\begin{proof} 
We construct an order and incompatibility preserving family $(t_s\mid s\in {}^{<\kappa}2)$. 
Let $t_\emptyset=\emptyset$. 
In limit steps, we form unions. 
In successor steps, suppose that $t_s$ has been constructed. 
If there is no $s\in S_0$ of length $\alpha$, let $t_{s^\smallfrown i} = t_s^\smallfrown i$ for all $s\in {}^\alpha2$ and  $i=0,1$. 
Now suppose that there is a (necessarily unique) $s\in S_0$ of length $\alpha$. 
Using density of $S_1$, find some $u\supseteq t_{s_\alpha}$ in $S_1$ and write $u=t_{s_\alpha}^\smallfrown v$. 
Let $t_{r^\smallfrown i}= t_r ^\smallfrown v^\smallfrown i$ for all $r\in {}^\alpha 2$ and $i=0,1$. 
Finally, define $f\colon \tk\rightarrow \tk$ by letting $f(x)=\bigcup_{\alpha<\kappa} t_{x{\upharpoonright}\alpha}$. Then $f$ is as required.  
\end{proof}

\section{A closed counterexample} 
\label{section a closed counterexample} 

The argument for $G_0^\kappa$ below is motivated by the argument for the following result.  
Let $E_0^\kappa$ denote the equivalence relation consisting of all pairs $(x,y)\in 2^\kappa\times 2^\kappa$ such that $x(i)=y(i)$ for all except finitely many $i<\kappa$. 
Then every connected component of $G_0^\kappa$ is contained in an $E_0^\kappa$-equivalence class.

\begin{proposition}\label{no selector for E0kappa} 
\todo[color=yellow!30]{Does $E_0^\kappa$ admit a selector with a $\kappa$-Borel graph?} 
$E_0^\kappa$ is smooth, but it does not admit a $\kappa$-Borel measurable selector. 
Moreover, in any $\Add(\kappa,1)$-generic extension, $E_0^\kappa$ admits no selector definable from a ground model element. 
\end{proposition}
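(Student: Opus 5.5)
The proof has three parts. For smoothness we reduce $E_0^\kappa$ to equality on ${}^\kappa\kappa$ (equivalently on $\tk$) by a continuous map. The nonexistence of a selector comes from a Cohen-genericity argument that flips a single bit beyond a decided bound. The $\kappa$-Borel case is then obtained from the generic case.

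\emph{Smoothness.} For each $\alpha<\kappa$, finite-difference equivalence on ${}^\alpha 2$ has at most $|{}^\alpha2|\le\kappa^{<\kappa}=\kappa$ classes. We fix an injective coding $c_\alpha$ of these classes by ordinals $<\kappa$, and let $f(x)(\alpha)=c_\alpha([x{\upharpoonright}\alpha])$. Since $f(x)(\alpha)$ depends only on $x{\upharpoonright}\alpha$, the map $f$ is continuous. Clearly $x\,E_0^\kappa\,y$ implies $f(x)=f(y)$. Conversely, suppose $f(x)=f(y)$ and let $D=\{i<\kappa\mid x(i)\neq y(i)\}$. Then $D\cap\alpha$ is finite for every $\alpha<\kappa$. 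If $D$ were infinite, let $\beta$ be the supremum of its first $\omega$ elements. Because $\kappa$ is regular and uncountable, $\beta<\kappa$, so $D\cap\beta$ would be infinite, a contradiction. Hence $D$ is finite and $f$ is a reduction.

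\emph{No selector in $V[x]$.} Let $x$ be $\Add(\kappa,1)$-generic over $V$, and suppose $s(z)=\varphi(z,a)$ defines a selector in $V[x]$ with $a\in V$. Thus $s(z)\,E_0^\kappa\,z$, and $s(z)=s(z')$ whenever $z\,E_0^\kappa\,z'$. The set of $i$ with $s(x)(i)\neq x(i)$ is finite and hence bounded by some ordinal $\beta<\kappa$. By genericity there is a condition $p\subseteq x$ and an ordinal $\beta$ such that $p$ forces
\[
\{i\mid s(\dot x)(i)\neq\dot x(i)\}\subseteq\beta .
\]
Extending $p$ if necessary, we may assume $\dom(p)\ge\beta$. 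Pick $j\ge\dom(p)$ and let $x'$ be $x$ with the $j$-th bit flipped. Flipping one coordinate is an automorphism of $\Add(\kappa,1)$ that fixes $p$. Hence $x'$ is generic over $V$, $x'\supseteq p$, and $V[x']=V[x]$, so $\varphi(\cdot,a)$ defines the same function in both models. Since $x'\,E_0^\kappa\,x$, we get $s(x')=s(x)$. By the choice of $p$ and $j\ge\beta$, we have $s(x)(j)=x(j)$, so $s(x')(j)=x(j)\neq x'(j)$. This difference lies at $j\ge\beta$, contradicting $p\Vdash$ ``differences lie below $\beta$'' applied to $x'$.

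\emph{No $\kappa$-Borel selector.} We carry out the same argument with category in place of forcing. We use that $\kappa$-Borel sets have the $\kappa$-Baire property and that $\tk$ is $\kappa$-Baire. Suppose $s$ is a $\kappa$-Borel selector. For $\beta<\kappa$ let
\[
B_\beta=\{z\mid \forall i\ge\beta\ s(z)(i)=z(i)\}.
\]
These sets are $\kappa$-Borel and cover $\tk$, so some $B_\beta$ is non-meager. Hence it is comeager in some $N_p$, and we may take $\dom(p)\ge\beta$. For $j\ge\dom(p)$, the bit-flip $f_j$ is a homeomorphism preserving $N_p$. So $B_\beta\cap f_j^{-1}[B_\beta]$ is comeager in $N_p$, and it contains some $z$. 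Since $s(f_j(z))=s(z)$, we get $s(f_j(z))(j)=z(j)\neq f_j(z)(j)$, contradicting $f_j(z)\in B_\beta$. (Alternatively, one could pass to a generic extension by absoluteness of $\kappa$-Borel codes, but the direct category argument avoids this.)

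The main obstacle is measurability rather than anything combinatorial. One must check that the sets $B_\beta$ are genuinely $\kappa$-Borel, using a $\kappa$-length intersection over $i$ of sets of the form $\{z\mid s(z)(i)=z(i)\}$. One must also check that the selector equality $s(f_j(z))=s(z)$ holds pointwise everywhere. This is immediate for a selector, though it would fail for a mere ``almost everywhere'' selector. In the forcing version, the delicate point is that the homogeneity automorphism fixes both $p$ and the ground-model parameter $a$.
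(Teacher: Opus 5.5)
Your proposal is correct. The smoothness map is the paper's map in different clothing; the paper codes the set of all initial segments of $x$ together with their finite modifications as a subset of $2^{<\kappa}$. Your generic-extension argument is the paper's bit-flip argument, with one minor variation. You decide only a bound $\beta$ for the difference set, whereas the paper decides the exact finite set $t=f(x)\triangle x$ and concludes $x=f(x)\triangle t=f(y)\triangle t=y$. The paper's use of $S$ to choose the flipped coordinate is inessential; any $j\ge\dom(p)$ works, as you say.

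You genuinely diverge in the $\kappa$-Borel case. The paper does not argue directly. It codes a putative $\kappa$-Borel selector by an element of $\tk$ and observes that ``the code defines a selector for $E_0^\kappa$'' is $\Pi^1_1$. It then uses $\Pi^1_1$-absoluteness for ${<}\kappa$-closed forcing to transfer the selector to an $\Add(\kappa,1)$-generic extension. There it would be a selector definable from a ground-model parameter, contradicting the second claim.

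Your argument with the sets $B_\beta$ is the category-theoretic translation of the same homogeneity idea. It avoids the coding, the complexity computation and the absoluteness theorem. It also proves slightly more. You only use that the $B_\beta$ have the $\kappa$-Baire property, and the sets with that property are closed under complements and unions of length $\kappa$. So your argument rules out any selector whose preimages of basic clopen sets have the $\kappa$-Baire property.

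The paper's route derives the Borel case uniformly from the forcing statement. That statement also covers arbitrary selectors definable in $V[x]$ from ground-model parameters. It also sets up the forcing template that the paper reuses, with tree forcings that need not be ${<}\kappa$-closed, in Lemma \ref{lemma Baire trees and G0 counterexamples}.
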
 
\begin{proof} 
To see that $E_0^\kappa$ is smooth, let $f\colon \tk\rightarrow \tk$ be the function that sends $x$ to the set $f(x)$ of all initial segments of $x$ and their finite changes. 
Since $f(x)$ is a subset of $2^{<\kappa}$ and we assume $2^{<\kappa}=\kappa$, we can identify the range of $f$ with $2^\kappa$. Clearly $f$ is continuous. 

We next show that in any $\Add(\kappa,1)$-generic extension, $E_0^\kappa$ admits no selector definable from a ground model element. 
We will then prove the remaining claim using this.  
Suppose towards a contradiction that $x$ is $\Add(\kappa,1)$-generic over $V$ and $f$ is a selector definable from a ground model element. 
Suppose that $p\subseteq x$ forces that $f(x) \triangle x = t$ for some finite subset $t$ of $\kappa$. 
Since $x$ is $\Add(\kappa,1)$-generic, it has unboundedly many initial segments in $S$ by a density argument. 
Hence $p\subseteq x{\upharpoonright}\alpha \in S$ for some $\alpha<\kappa$. 
Let $y\in \tk$ be the function with $x(i)=y(i)$ for all $i\neq \alpha$, but $x(\alpha)\neq y(\alpha)$. 
Then $V[x]=V[y]$. 
Moreover, $y$ is $\Add(\kappa,1)$-generic over $V$ and therefore $f(y) \triangle y = t$. 
Using that $f$ is a selector and $(x,y)\in E_0^\kappa$, we thus have $f(x)=f(y)$ and $x =f(x)\triangle t=f(y)\triangle t=y$, contradicting the fact that $x$ and $y$ differ at $\alpha$. 

Finally, we show that $E_0^\kappa$ does not admit a $\kappa$-Borel measurable selector. 
Suppose towards a contradiction that $f$ is a $\kappa$-Borel measurable selector for $E_0^\kappa$. 
We can code $f$ by listing each $t\in 2^{<\kappa}$ together with a $\kappa$-Borel code for $f^{-1}(N_t)$. 
Using a binary relation on $\kappa$ coding $(H_\kappa,\in)$, this list can be coded by a subset $x$ of $\kappa$. 
We claim that the statement that $x$ codes a function that is a selector for $E_0^\kappa$ is $\Pi^1_1(\tk)$ in $x$. 
This is easy to see for the statement that $x$ codes a function. 
Moreover, the statement that $x$ codes a selector for $E_0^\kappa$ says that for all $x, y \in 2^\kappa$ $f(x)\triangle x$ is finite, and 
$x\triangle y$ is finite if and only if $f(x)=f(y)$.  
Finiteness of the symmetric difference is arithmetic over $(H_\kappa,\in)$, so the claim holds. 
By $\Pi^1_1(\tk)$-absoluteness for ${<}\kappa$-closed generic extensions \cite[Proposition 7.3]{lucke2012definability}, the function coded by $x$ in $V[G]$ is a $\kappa$-Borel measurable selector for $E_0^\kappa$. 
But this contradicts the previous claim. 
\end{proof} 

It is easy to see that $E_0^\kappa$ is induced by a continuous action of a group of size $\kappa$ on $\tk$. 
This contrasts with the situation for Polish spaces, since any smooth orbit equivalence relation of a Polish group action on a Polish space admits a Borel measurable selector by a result of Kechris \cite[Theorem 5.4.11]{gao2008invariant}.

The following closed graph was defined by Szir\'aki in \cite[Example 5.3.5]{schlicht2023open}. 
We will show that this graph is a counterexample to the $G_0^\kappa$-dichotomy. 
Let $\mathbb{L}$ be a ${<}\kappa$-dense linear order on $\kappa$ in the sense that there are no $({<}\kappa,{<}\kappa)$-gaps. 
The vertex set for our graph is the set $V_{\mathrm{wo}}$ of subsets of $\kappa$ which are well-ordered in $\mathbb{L}$, and $G_{\mathrm{wo}}$ is the graph on $V_{\mathrm{wo}}$ given by strict end extension. $G_{\mathrm{wo}}$ is closed, since for distinct $x$ and $y$ in the domain of $G_{\mathrm{wo}}$, $x$ is not a proper initial segment of $y$ if and only if there exists an element of $x$ and an element of $y\setminus x$ less or equal to it. 

\begin{proposition}\label{prop graph of wellorders}\ 
\begin{enumerate-(1)} 
\item\label{prop graph of wellorders 1} 
$G_{\mathrm{wo}}$ does not admit a $\kappa$-coloring.

\item\label{prop graph of wellorders 2} 
There exists no continuous homomorphism from $G_{\mathrm{wo}}$ to $G_0^\kappa$.

\end{enumerate-(1)} 
\end{proposition}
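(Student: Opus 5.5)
Part (1) is a well-foundedness argument; part (2) compares the coloring numbers of the two graphs along a continuous homomorphism.

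\emph{Plan for (1).} Suppose towards a contradiction that $c\colon V_{\mathrm{wo}}\to\kappa$ is a coloring. Note that $G_{\mathrm{wo}}$ joins every $x$ to every proper end extension of $x$. Hence along any strictly end-extending chain of well-ordered sets, all members must receive distinct colors. The idea is to build, by recursion on $\alpha<\kappa^+$, a chain $(x_\alpha\mid\alpha<\kappa^+)$ of elements of $V_{\mathrm{wo}}$ that is strictly increasing under end extension. This would give $\kappa^+$ pairwise adjacent vertices, so $c$ could not be injective on the chain, a contradiction.

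The recursion needs two things:
\begin{itemize}
\item at successor steps, for $x\in V_{\mathrm{wo}}$ with $x\neq\kappa$, some element of $\kappa\setminus x$ lies above all of $x$ in $\mathbb{L}$;
\item at limit steps, unions of end-extension chains are again well-ordered and end-extend each member.
\end{itemize}
The second point is routine. The obstacle is that every $x\subseteq\kappa$ has size at most $\kappa$, so a chain of length $\kappa^+$ is impossible.

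I would therefore refine the argument to a chain of length $\kappa$, combined with a pressing-down or diagonal choice. At stage $\alpha$, choose $x_{\alpha+1}$ end-extending $x_\alpha$ so that its color avoids $\{c(x_\beta)\mid\beta\le\alpha\}$. Alternatively, use a tree of possibilities: $<\kappa$-density of $\mathbb{L}$ gives, above each $x$ of size $<\kappa$, many incompatible one-point extensions, and these are pairwise non-adjacent.

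The cleaner route I would actually try is a Baire-category or fusion argument inside the space of well-ordered subsets. Build a $<\kappa$-closed tree of conditions (bounded well-ordered sets), and at stage $i$ ensure that no vertex extending the current node carries color $i$. If every extension of some node $p$ had color $i$, then $p$ together with two comparable extensions of it would be adjacent vertices of the same color, contradicting that $c$ is a coloring. Hence at each stage we can move to an extension that avoids color $i$ in the appropriate sense. Taking the union after $\kappa$ steps gives an $x\in V_{\mathrm{wo}}$ whose color is not any $i<\kappa$, which is a contradiction. The $<\kappa$-density of $\mathbb{L}$ (no small gaps) ensures that the limit stages of the fusion stay well-ordered and extendable. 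Pinning down the precise form of ``avoid color $i$'' is the hard step.

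\emph{Plan for (2).} I would use the fact that $G_0^\kappa$ does admit a $\kappa$-coloring, without any definability requirement. By Proposition \ref{no selector for E0kappa}, every connected component of $G_0^\kappa$ lies inside an $E_0^\kappa$-class. Each class has size $\kappa$, since $\kappa^{<\omega}=\kappa$, so an arbitrary injection of each class into $\kappa$ colors $G_0^\kappa$ with $\kappa$ colors. Now if $h$ is any homomorphism (continuous or not) from $G_{\mathrm{wo}}$ to $G_0^\kappa$, then $c\circ h$ is a $\kappa$-coloring of $G_{\mathrm{wo}}$, contradicting (1). Thus (2) follows immediately from (1), and the main work lies in the fusion argument of (1).
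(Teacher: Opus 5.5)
Your part (2) is correct. Injecting each $E_0^\kappa$-class, which has size $\kappa$, into $\kappa$ gives a $\kappa$-coloring of $G_0^\kappa$, and composing it with any homomorphism colors $G_{\mathrm{wo}}$. The paper instead uses that $G_{\mathrm{wo}}$ is connected (every vertex is adjacent to $\emptyset$), so the image lies in one class $C$ and the fibres $f^{-1}(y)$, $y\in C$, serve as color classes. Neither argument needs continuity. The gap is in part (1), which carries all the content.

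\textbf{The gap in (1).} The step you flag as hard is not just unfinished. As formulated, it does not follow from the only property of the $i$-th color class you have, namely its independence. Your justification only shows that some extension of $p$ has color $\neq i$. What the final contradiction needs is a node $q$ above which color $i$ never occurs. To see that independence cannot give this, call $x\in V_{\mathrm{wo}}$ \emph{cofinal} if no element of $\mathbb{L}$ lies strictly above all elements of $x$.
\begin{itemize}
\item A cofinal $x$ has no strict end extension at all, so the family $I$ of cofinal well-ordered sets is $G_{\mathrm{wo}}$-independent.
\item Every bounded $p\in V_{\mathrm{wo}}$ has an end extension in $I$: append a cofinal increasing sequence lying above a bound of $p$.
\end{itemize}
So for a color class containing $I$, there is no bounded node above which color $i$ is avoided. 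In particular, your final union may be cofinal and carry any color without contradicting the earlier stages. Your first refinement, choosing colors that avoid the earlier ones, is vacuous: colors along an end-extension chain are automatically distinct.

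\textbf{The missing idea.} You need to \emph{hit} color $i$ rather than avoid it, and to keep the final union bounded. Once some $a$ has color $i$, every proper end extension of $a$ is adjacent to $a$ and so avoids color $i$. The paper runs this as follows.
\begin{itemize}
\item Alongside the chain $(x_\alpha\mid\alpha<\kappa)$, build a strictly decreasing sequence $(b_\alpha\mid\alpha<\kappa)$ in $\mathbb{L}$ with $x_\alpha$ bounded strictly below $b_\alpha$.
\item At stage $\beta$, the absence of $({<}\kappa,{<}\kappa)$-gaps gives $b_\beta$ below all earlier $b_\alpha$ such that $\bar x_\beta=\bigcup_{\alpha<\beta}x_\alpha$ is bounded strictly below $b_\beta$.
\item Choose $x_\beta$ as a strict end extension of $\bar x_\beta$ bounded below $b_\beta$. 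Put it in the $\beta$-th color class whenever such a pair $(x_\beta,b_\beta)$ exists.
\end{itemize}
The union $x$ is bounded by $b_{\beta+1}<b_\beta$ for every $\beta$. Hence at stage $\beta=c(x)$ the set $x$ itself was an admissible choice, so $x_\beta$ has color $\beta$. But $x$ strictly end-extends $x_\beta$, a contradiction. Your remark about small gaps handles limit stages below $\kappa$, but only the decreasing bounds make the final union one of the candidates.
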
 
\begin{proof} 
\ref{prop graph of wellorders 1} 
Suppose otherwise. 
Then $V_{\mathrm{wo}}=\bigcup_{\alpha<\kappa} X_\alpha$ where $X_\alpha$ is a $G_{\mathrm{wo}}$-independent set for each $\alpha<\kappa$. 
We construct a strictly increasing chain
$( x_\alpha\mid \alpha<\kappa)$ in $X$ with respect to strict end extension and a strictly 
$\leq$-decreasing chain 
$( b_\alpha \mid \alpha<\kappa)$ in $L$ 
such that for all $\alpha<\kappa$: 
\begin{enumerate-(i)} 
\item \label{prop graph of wellorders condition} 
$\sup(x_\alpha)<b_\alpha$. 

\item 
$x_\alpha\in X_\alpha$ if there exists $x_\alpha$ and $b_\alpha$ extending $( x_\alpha\mid \alpha<\kappa)$ and $( b_\alpha\mid \alpha<\kappa)$ such that \ref{prop graph of wellorders condition} holds. 
\end{enumerate-(i)} 
In step $\beta$, we first choose $b_\beta$ such that $b_\beta<b_\alpha$ for all $\alpha<\beta$ and $\bar{x}_\beta=\bigcup_{\alpha<\beta}x_\alpha$ has an upper bound strictly below $b_\beta$. 
This work since $\mathbb{L}$ is ${<}\kappa$-saturated. 
We then choose a strict end extension $x_\beta$ of $\bar{x}_\beta$ that has an upper bound strictly below $b_\beta$. 
We choose $x_\beta \in X_\beta$ if there exists a pair $(x_\beta,b_\beta)$ with these properties. 
This completes the construction. 
Now let  $x=\bigcup_{\alpha<\kappa}x_\alpha$. 
Since $x$ is a wellorder, $x\in X_\beta$ for some $\beta<\kappa$. 
Therefore we must have chosen $x_\beta\in X_\beta$ in step $\beta$. 
But $x, x_\beta\in X_\beta$ and $x$ is a strict end extension of $x_\beta$, contradicting the fact that $X_\beta$ is $H$-independent. 

\ref{prop graph of wellorders 2} 
Suppose towards a contradiction that $f\colon \tk\rightarrow \tk$ is a continuous homomorphism from $G$ to $G_0^\kappa$. 
Note that $G$ has only one connected component. 
Therefore, the range of $f$ is contained in a single connected component of $G_0^\kappa$ and thus in an equivalence class $C$ of $E_0^\kappa$. 
Since $f^{-1}(y)$ is a $G_{\mathrm{wo}}$-independent set for each $y\in C$ and $|C|=\kappa$, $f$ is a continuous $\kappa$-coloring of $G$. 
This contradicts \ref{prop graph of wellorders 1}. 
\end{proof}

The next lemma describes circumstances in which there exists no continuous homomorphism from an induced subgraph of $G_0^\kappa$ to $G_{\mathrm{wo}}$. 
For a tree $T$ and any $t\in T$, let $T_t=\{u\in T \mid u\subseteq t \vee  t\subseteq u\}$. 
For any $\alpha<\kappa$, let $f_\alpha\colon {}^{\leq\kappa}2\rightarrow {}^{\leq\kappa}2$ be the function defined by $f_\alpha(x)(i)=x(i)$ for all $i\neq \alpha$ and $f_\alpha(x)(\alpha) \neq x(\alpha)$. 
A subtree $T$ of ${}^{<\kappa}\kappa$ is called \emph{pruned} if for every $t\in T$, there exists some $x\in [T]$ such that $t\subseteq x$. 

\begin{lemma}\label{lemma Baire trees and G0 counterexamples} 
Suppose that $T$ is a pruned subtree of $2^{<\kappa}$ such that 
\begin{enumerate-(a)} 
\item\label{lemma Baire trees and G0 counterexamples a} 
$[T]$ is $\lambda$-Baire for all cardinals $\lambda<\kappa$. 

\item\label{lemma Baire trees and G0 counterexamples b}
There are cofinally many $t\in T$ such that $t\in S\cap T$ and for $\alpha=\len(t)$, $f_\alpha(T_t)\subseteq T_t$. 
\end{enumerate-(a)} 
Then there exists no continuous homomorphism from $G_0^\kappa{\upharpoonright} [T]$ to $G_{\mathrm{wo}}$. 
\end{lemma}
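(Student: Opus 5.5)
Suppose towards a contradiction that $f\colon [T]\rightarrow V_{\mathrm{wo}}$ is a continuous homomorphism from $G_0^\kappa{\upharpoonright}[T]$ to $G_{\mathrm{wo}}$. The plan is to produce a point $y\in[T]$ for which $f(y)$ fails to be well-ordered in $\mathbb{L}$, or else to reach a direct clash with continuity. We will build $y$ as a limit of points obtained by flipping digits at the special nodes supplied by \ref{lemma Baire trees and G0 counterexamples b}.

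First, for a node $t$ as in \ref{lemma Baire trees and G0 counterexamples b} with $\alpha=\len(t)$, the flip $f_\alpha$ is a homeomorphism of $[T_t]$ by pruned-ness, and every pair $(x,f_\alpha(x))$ with $x\in[T_t]$ is a $G_0^\kappa$-edge, since $t\in S$. Split $[T_t]$ into two sets:
\begin{itemize}
\item $A_t$, the set of $x$ such that $f(f_\alpha(x))$ strictly end extends $f(x)$;
\item $B_t$, the set of $x$ such that $f(x)$ strictly end extends $f(f_\alpha(x))$.
\end{itemize}
Since $f$ is a homomorphism, $A_t\cup B_t=[T_t]$. We have $f_\alpha(A_t)=B_t$. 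Both sets are closed, because $G_{\mathrm{wo}}$ is closed (by the characterization of non-end-extension given after its definition) and $f$ is continuous.

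Second, run a fusion construction of length $\omega$ (more generally of length $\lambda<\kappa$). Build nodes $t_0\subsetneq t_1\subsetneq\cdots$, each of the kind in \ref{lemma Baire trees and G0 counterexamples b}, with $\alpha_n=\len(t_n)$. At stage $n$, shrink to a basic open set inside $[T_{t_n}]$ that is contained in $A_{t_n}$ or in $B_{t_n}$. Closedness together with the cover $A_{t_n}\cup B_{t_n}$ gives such a set. Use the symmetry $f_{\alpha_n}(A_{t_n})=B_{t_n}$ to choose the orientation we want, and also fix by continuity a long initial segment of $f$ on that neighbourhood. The hypothesis \ref{lemma Baire trees and G0 counterexamples a} ($\lambda$-Baire for $\lambda<\kappa$) guarantees that the intersection of the $\lambda$ many open sets we pass through is nonempty in $[T]$. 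So we obtain a limit point $y$ together with neighbours $y_n=f_{\alpha_n}(y)$, and $y_n\to y$.

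Third, derive the contradiction. Arrange the orientations so that the values $f(y_n)$, or their restrictions fixed during the construction, form a strictly $\mathbb{L}$-decreasing witness sequence inside the set $f(y)$. Concretely, alternate: at each stage pick a new element of the current image lying $\mathbb{L}$-below the previously chosen witnesses, using that a strict end extension adds elements above everything already present. If this works, the limit $f(y)$ contains an infinite $\mathbb{L}$-descending sequence, contradicting $f(y)\in V_{\mathrm{wo}}$. The main obstacle is exactly this bookkeeping. Merely knowing that $f(y_n)$ and $f(y)$ are end-extension comparable and that $f(y_n)\to f(y)$ is not contradictory by itself, since a convergent sequence of initial segments is harmless. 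One must therefore exploit the freedom, via $f_{\alpha_n}$, to switch to the other side of each edge, keeping the accumulated finite information about $f$ consistent through the fusion. If the $\omega$-length construction does not suffice, I would run it with length $\lambda=\cof$ of the relevant order type, which is where $\lambda$-Baireness for all $\lambda<\kappa$ is needed.
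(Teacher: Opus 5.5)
There is a genuine gap, and it sits exactly where you locate the ``main obstacle'': your construction has no mechanism for producing an $\mathbb{L}$-descending sequence in $f(y)$.

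The points $y_n=f_{\alpha_n}(y)$ are pairwise non-adjacent, since any two of them differ at two coordinates. So the homomorphism only tells you, for each $n$ separately, that $f(y_n)$ and $f(y)$ are comparable under strict end extension. With the orientation $y\in B_{t_n}$, the sets $f(y_n)$ are proper initial segments of $f(y)$, with no forced relation among themselves. With $y\in A_{t_n}$, they end-extend $f(y)$. End extensions only add elements on top, so nothing is ever forced below previously chosen witnesses. Since $f(y)\in V_{\mathrm{wo}}$ by assumption, ``if this works'' is a restatement of the lemma.

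Two auxiliary steps are also incorrect.
\begin{enumerate-(1)}
\item $y_n\to y$ fails for uncountable $\kappa$. With $\delta=\sup_n\alpha_n<\kappa$, the neighbourhood $N_{y\upharpoonright\delta}$ of $y$ contains no $y_n$, and the same holds for sequences of any length $\lambda<\kappa$. So $f(y_n)\to f(y)$ is not available either.
\item Hypothesis (a) does not make a decreasing sequence of basic open subsets of $[T]$, chosen during a construction, have nonempty intersection, because $T$ need not be ${<}\kappa$-closed. $\lambda$-Baireness only concerns $\lambda$ many open \emph{dense} sets, so a fusion must be organized through dense open sets that do not depend on earlier choices.
\end{enumerate-(1)}

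The missing idea is an invariant that compares \emph{adjacent} points: $o(x)=\mathrm{ot}(f(x),<_{\mathbb{L}})$, an ordinal that strictly changes along every edge. The paper exploits it by forcing with $T$:
\begin{enumerate-(i)}
\item (a) gives ${<}\kappa$-distributivity, and $f$ extends to a continuous $\bar f$ that is still a homomorphism on $T$-generic branches.
\item Some $p\subseteq x$ decides $\mathrm{ot}(\bar f(x))=\gamma$.
\item By (b) and genericity there is a special $\alpha$ with $p\subseteq x\upharpoonright\alpha\in S$.
\item Then $y=f_\alpha(x)$ is again generic with $V[y]=V[x]$ and $p\subseteq y$, so $\bar f(y)$ also has order type $\gamma$. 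This is impossible when one set strictly end-extends the other.
\end{enumerate-(i)}

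Your fusion idea can also be repaired without forcing, but only by replacing the star of flips around $y$ with a \emph{path}. Set $y^{(0)}=y$ and $y^{(k+1)}=f_{\alpha_k}(y^{(k)})$, where $\alpha_k>\alpha_{k-1}$ is the least level such that $t=y^{(k)}\upharpoonright\alpha_k$ is one of the nodes from (b) and $y^{(k)}\in B_t$.
\begin{enumerate-(a)}
\item The sets $f(y^{(k)})$ then form a strictly decreasing chain of initial segments of $f(y^{(0)})$, so their order types decrease forever. This in fact yields your descending sequence in $f(y^{(0)})$.
\item Let $E_n$ be the set of $y\in[T]$ whose canonical descent survives $n$ steps. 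Each $E_n$ is open, since fewer than $\kappa$ clopen conditions determine each $\alpha_k$ locally.
\item Each $E_n$ is dense, since $f_\alpha$ swaps the nonempty clopen halves $A_t,B_t$ of $N_t\cap[T]$ and the nodes from (b) are cofinal.
\item So (a) for $\lambda=\omega$ gives a point of $\bigcap_n E_n$, and the contradiction follows.
\end{enumerate-(a)}
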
 
\begin{proof} 
Suppose that $f\colon [T]\rightarrow \tk$ is a continuous homomorphism from $G_0^\kappa{\upharpoonright}[T]$ to $G_{\mathrm{wo}}$. 
We consider $T$ as a forcing with the reverse order. 

\begin{claim*} 
$T$ is ${<}\kappa$-distributive. 
\end{claim*} 
\begin{proof} 
Suppose that $\lambda<\kappa$ is a cardinal and $(D_i\mid i<\lambda)$ is a sequence of open (upwards closed) dense subsets of $T$. 
Then $U_i=\{x\in [T] \mid x \in N_t \text{ for some $t\in D_i$} \}$ is an open dense subset of $[T]$ for each $i<\lambda$. 
Since $[T]$ is $\lambda$-Baire by \ref{lemma Baire trees and G0 counterexamples a}, $\bigcap_{i<\lambda} U_i$ is dense in $[T]$. 
Suppose that $t\in T$. 
Pick some $x\in \bigcap_{i<\lambda} U_i$ with $t\subseteq x$. 
Then there exist $t_i \in D_i$ with $t_i\subseteq x$ for all $i<\lambda$. 
Thus $t\subseteq \bigcup_{i<\lambda} t_i \in \bigcap_{i<\lambda} D_i$ by the definition of $U_i$, using that $\kappa$ is regular. 
\end{proof} 

Let $x\in \tk$ be $T$-generic over $V$. 
Let $x_0\in \tk$ code the set of pairs $(s,t)\in {}^{<\kappa}2\times {}^{<\kappa}2$ such that $f(N_s)\subseteq N_t$. 
Let $\bar{f}$ denote the partial continuous function on $\tk$ in $V[x]$ coded by $x_0$. 
Let $Gen_T\subseteq \tk$ denote the set of $T$-generics over $V$ in $V[x]$. 

\begin{claim*} 
$Gen_T\subseteq \dom(\bar{f})$. 
\end{claim*} 
\begin{proof} 
Suppose that $y\in Gen_T$ and $\alpha<\kappa$. It suffices to show that there exists some $(s,t)\in x_0$ such that $s\subseteq y$ and $t\in {}^\alpha 2$. 
To see this, let $D$ denote the set of $u\in T$ such that there exists some $s\subseteq u$ and some $t\in {}^\alpha2$ with $(s,t)\in x_0$. 
Since $f$ is continuous, $D$ is dense. 
Since $y$ is generic, it has an initial segment in $D$ and the claim follows. 
\end{proof}

Note that if $T$ is ${<}\kappa$-closed, then $\bar{f}$ is a total function in $V[x]$, but  this is not the case in general. 

\begin{claim*} 
$\bar{f}$ is a partial homomorphism from $G_0^\kappa{\upharpoonright}Gen_T$ to $G_{\mathrm{wo}}$. 
\end{claim*} 
\begin{proof} 
Suppose that $y\in Gen_T$ and $z$ is such that $(y,z)\in G_0^\kappa$. 
Then $z\in Gen_T$ and there is some $r\in S$ such that $r^\smallfrown i\subseteq y$ and $r^\smallfrown j\subseteq y$ for $i\neq j$. 
We can assume that $i=0$. 
Let $\alpha=\len(r)$. 
We want to show that $(f(y),f(z))\in G_{\mathrm{wo}}$. 
To see this, we first argue that $f(y)\neq f(z)$. 
Let $D$ be the set of all $t\in T$ such that $r^\smallfrown 0\subseteq t$ and there are incompatible $u,v\in {}^{<\kappa}2$ such that $(t,u),(f_\alpha(t),v)\in x_0$. 
Since $D$ is dense below $r^\smallfrown 0$, $y$ has an initial segment in $D$.
Since $z=f_\alpha(y)$, we have $f(y)\neq f(z)$ as required. 
Since $G_{\mathrm{wo}}$ is a closed subset of $({}^\kappa2 \times {}^\kappa2)\setminus \Delta$ and $\bar{f}$ is a continuous extension of $f$, it now suffices to find $(a,b)\in \dom(f)$ arbitrarily close to $(y,z)$ with $(f(a),f(b))\in G_{\mathrm{wo}}$. 
To this end, suppose that $\gamma<\kappa$. 
Let $D_\gamma$ be the set of all $u\in T$ such that 
\todo[color=yellow!20]{actually some of the notation can be simplified everywhere if we assume $x_0$ collects \emph{all} pairs $(s,t)$ such that $f(N_s)\subseteq N_t$. we can replace $s$ by $u$ etc.} 
there exist $s\subseteq u$, $t\subseteq f_\alpha(u)$ and $v,w\in {}^\gamma 2$ with $(s,v), (t, w) \in x_0$. 
Since $f$ is continuous, $D_\gamma$ is dense above $r^\smallfrown 0$ and hence $y{\upharpoonright}\beta\in D_\gamma$ for some $\beta<\kappa$. 
Let $a\in \tk$ be a ground model element that extends $y{\upharpoonright} \beta$ and $b=f_\alpha(a)$. 
For $s, t, u, v$ as in the definition of $D_\gamma$, we have $s\subseteq a{\upharpoonright}\beta$ and $t\subseteq b{\upharpoonright}\beta$. 
Then $(a,b)\in G_0^\kappa$ and hence $(f(a),f(b))\in G_{\mathrm{wo}}$. 
Then $f(a){\upharpoonright}\gamma=f(y){\upharpoonright}\gamma=v$ and $f(b){\upharpoonright}\gamma=f(z){\upharpoonright}\gamma=w$ as required. 
\end{proof} 

Note that if $T$ is ${<}\kappa$-closed, then the previous claim holds by $\Pi^1_1({}^\kappa2)$-absoluteness. 

Now work in $V[x]$. 
The next argument is similar to the proof of Lemma \ref{no selector for E0kappa}. 
Suppose $p\subseteq x$ forces that $\bar{f}(x)$ has order type $\gamma$. 
Note that $x$ has unboundedly many initial segments in $S$ that satisfy \ref{lemma Baire trees and G0 counterexamples b} by genericity. 
Hence there is some $\alpha<\kappa$ such that $p\subseteq x{\upharpoonright}\alpha \in S$. 
Let $y=f_\alpha(x)$. 
Since $f_\alpha(T_t)=T_t$ by \ref{lemma Baire trees and G0 counterexamples b}, $y$ is $T$-generic over $V$ and $V[x]= V[y]$. 
Since $p\subseteq y$, $\bar{f}(y)$ has order type $\gamma$. 
Since $(x,y)\in G_0^\kappa$ and $f$ is a homomorphism, $(f(x),f(y))\in G$. 
Thus $f(x)$ is a strict end extension of $f(y)$ or conversely. 
But this contradicts the fact that both have order type $\gamma$. 
\end{proof} 

Proposition \ref{prop graph of wellorders} and Lemma \ref{lemma Baire trees and G0 counterexamples} for $T={}^{<\kappa}2$ show that $G_{\mathrm{wo}}$ is a counterexample to the $G_0^\kappa$-dichotomy. 

\begin{theorem}
\label{G0 dichotomy fails for Gwo} 
The $G_0$-dichotomy fails for $G_{\mathrm{wo}}$. 
\end{theorem}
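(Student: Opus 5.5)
The plan is to verify that $G_{\mathrm{wo}}$ falls into neither alternative of the $G_0^\kappa$-dichotomy. First, $G_{\mathrm{wo}}$ is a closed graph on the closed set $V_{\mathrm{wo}}$, as noted before Proposition \ref{prop graph of wellorders}. Identifying subsets of $\kappa$ with elements of $\tk$, we regard it as a closed graph on $\tk$; the dichotomy should not depend on restricting the vertex set to the closed set $V_{\mathrm{wo}}$. The first alternative would be a $\kappa$-Borel $\kappa$-coloring. It is excluded by Proposition \ref{prop graph of wellorders} \ref{prop graph of wellorders 1}, which rules out any $\kappa$-coloring of $G_{\mathrm{wo}}$, definable or not.

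The second alternative would be a continuous homomorphism from $G_0^\kappa$ to $G_{\mathrm{wo}}$, and this is the only part needing an argument. We apply Lemma \ref{lemma Baire trees and G0 counterexamples} with $T={}^{<\kappa}2$, so that $G_0^\kappa{\upharpoonright}[T]=G_0^\kappa$ and the lemma yields exactly the absence of such a homomorphism. We check its hypotheses in order.
\begin{enumerate-(1)}
\item The tree ${}^{<\kappa}2$ is pruned.
\item The space $[T]=\tk$ is $\lambda$-Baire for every $\lambda<\kappa$, indeed even $\kappa$-Baire. This is the standard Baire category argument: using ${<}\kappa$-closure of ${}^{<\kappa}2$ and regularity of $\kappa$, build an increasing chain of nodes inside successive dense open sets and take unions at limits.
\item For hypothesis \ref{lemma Baire trees and G0 counterexamples b}, take any $t\in S$ and $\alpha=\len(t)$. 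Then $T_t$ consists of the initial segments of $t$ together with all extensions of $t$. Flipping coordinate $\alpha$ fixes every $u\subseteq t$, since such $u$ has length at most $\alpha$. It maps each extension of $t$ to another extension of $t$, because $t$ itself is untouched. Hence $f_\alpha(T_t)\subseteq T_t$.
\item These $t$ are cofinal in $T$ because $S$ is upward dense in ${}^{<\kappa}2$.
\end{enumerate-(1)}

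The lemma then gives that there is no continuous homomorphism from $G_0^\kappa$ to $G_{\mathrm{wo}}$. Combined with Proposition \ref{prop graph of wellorders} \ref{prop graph of wellorders 1}, this shows the dichotomy fails for $G_{\mathrm{wo}}$. All the real difficulty lies in the genericity argument inside Lemma \ref{lemma Baire trees and G0 counterexamples}, which we may assume. The only point needing care here is bookkeeping: the homomorphism from Lemma \ref{lemma Baire trees and G0 counterexamples} should take values in $V_{\mathrm{wo}}$, and the $\kappa$-Baire property should hold at every $\lambda<\kappa$, which is immediate for the full tree.
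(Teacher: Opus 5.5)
Your proposal is correct and follows the paper's argument: the theorem is obtained by combining Proposition \ref{prop graph of wellorders}\ref{prop graph of wellorders 1} (no $\kappa$-coloring at all) with Lemma \ref{lemma Baire trees and G0 counterexamples} applied to $T={}^{<\kappa}2$. Your check of the lemma's hypotheses for the full tree, namely that $f_\alpha$ fixes the initial segments of $t$ and maps extensions of $t$ to extensions of $t$, with cofinality coming from the upward density of $S$, is exactly what the paper leaves implicit.
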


\begin{remark} 
The analogous graph $G_{\mathrm{wo}}$ for $\kappa=\omega$ is a $\Pi^1_1$ counterexample to the $G_0$-dichotomy. 
To see this, one can use a simplified form of the proof of Lemma \ref{lemma Baire trees and G0 counterexamples}. 
The first two claims hold by $\Pi^1_1$-absoluteness and the proof of the last claim is virtually unchanged. 
\end{remark}

\section{An induced subgraph in $L$} 
\label{section induced subgraph}

One might ask 
\todo[color=yellow!30]{can the following be done for $L[U]$ and other fine structural inner models?}
if there are simpler counterexamples than the graph $G_{\mathrm{wo}}$, for instance induced subgraphs of $G_0^\kappa$ on closed sets. 
The below arguments use that $V=L$, where $L$ denotes the constructible universe $L$. 
A \emph{slim} subtree of ${}^{<\kappa}2$ is one such that $|\mathrm{Lev}_\alpha(T)|\leq\alpha$ for stationarily many $\alpha<\kappa$. 
The next lemma follows from the proof of \cite[Proposition 7.2]{lucke2016hurewicz}. 
The latter is based on an idea from \cite[Section 3]{friedman2015failures}.

\begin{lemma}\label{lemma Kurepa tree in L} 
Suppose that $V=L$ and $\kappa$ is an uncountable regular cardinal. 
There exists a pruned slim $\kappa$-Kurepa subtree $T$ of ${}^{<\kappa}2$ such that 
\begin{enumerate-(1)} 
\item\label{lemma Kurepa tree in L 1}
$[T]$ is $\kappa$-Baire. 

\item\label{lemma Kurepa tree in L 2}
For cofinally many $t\in T$ and $\alpha=\len(t)$, $f_\alpha(T)\subseteq T$. 
\end{enumerate-(1)} 
\end{lemma}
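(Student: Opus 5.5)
We will build $T$ in $L$ by a $\diamondsuit_\kappa$-guided construction of levels, following the proof of \cite[Proposition 7.2]{lucke2016hurewicz}. Fix the canonical $\diamondsuit_\kappa$-style sequence given by condensation: for each $\alpha<\kappa$, let $M_\alpha$ be the least $L_\delta$ with $\alpha\in L_\delta$ such that $L_\delta\models$ "$\alpha$ is regular" fails or is witnessed. More precisely, take $M_\alpha=L_{\delta_\alpha}$, the least level of $L$ containing $T{\upharpoonright}\alpha$ and in which $\alpha$ is countable or collapsed relative to the next admissible, so that $M_\alpha$ is the transitive collapse of an elementary substructure of $L_{\kappa^+}$ whenever $\alpha$ is in a club. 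We define $T{\upharpoonright}\alpha$ by recursion on $\alpha$.

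At successor levels we let every node split into both immediate successors. This makes $T$ uniformly splitting at successors, so \ref{lemma Kurepa tree in L 2} holds at every node of successor length: $f_\alpha$ just flips the coordinate $\alpha$, and the higher levels are closed under this flip as long as limit levels are chosen invariantly. At a limit $\alpha$ we let $\mathrm{Lev}_\alpha(T)$ consist of all branches $b$ through $T{\upharpoonright}\alpha$ that are generic over $M_\alpha$ for the forcing $T{\upharpoonright}\alpha$, i.e. meet every dense subset of $T{\upharpoonright}\alpha$ lying in $M_\alpha$, provided $M_\alpha$ sees $T{\upharpoonright}\alpha$. Since $M_\alpha$ has size $|\alpha|$ and the relevant dense sets are closed under the $f_\beta$'s, we can arrange that every node has a generic branch through it, giving prunedness. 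The generic branches are closed under each $f_\beta$ for $\beta<\alpha$, because $f_\beta$ is an automorphism of $T{\upharpoonright}\alpha$ lying in $M_\alpha$. This invariance is what propagates \ref{lemma Kurepa tree in L 2}.

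Slimness: on the stationary set of $\alpha$ where $\alpha$ is a cardinal of $M_\alpha$ but $M_\alpha$ already projects to $\alpha$, we have $|\mathrm{Lev}_\alpha|\le|M_\alpha|=|\alpha|$. For slimness in the strict sense $\le\alpha$ we use that the generic branches are definable over $M_\alpha$ from parameters, so there are at most $|M_\alpha|$ of them, and for $\alpha$ a limit cardinal in a stationary set this is $\alpha$.

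Kurepa: we need $\kappa^+$ branches. We show that every $x\in{}^\kappa2\cap L$ of a suitable kind is a branch. Take $X\prec L_{\kappa^+}$ containing $x$ and $T$, with $X\cap\kappa=\alpha$. The collapse is some $L_\beta$ with $\beta\ge\delta_\alpha$; here we use the choice of $M_\alpha$ below the collapse. Then $x{\upharpoonright}\alpha$ is generic over $M_\alpha$ when $x$ is $T$-generic over sufficiently large models. To get $\kappa^+$ such $x$, take for each $\gamma<\kappa^+$ a branch generic over $L_\gamma$ for $T$, which exists since $T$ is ${<}\kappa$-distributive on dense sets of size $\kappa$ by a fusion argument.

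$\kappa$-Baire: given dense open $D_i$, $i<\kappa$, and $t$, take $X\prec L_{\kappa^+}$ containing everything and build the condensation-guided branch. At club many $\alpha$ the limit level contains a branch meeting $D_i\cap T{\upharpoonright}\alpha$ for $i<\alpha$. Hence we can continue a fusion through limit stages: the obstacle is that a sequence $t_0\subseteq t_1\subseteq\cdots$ of length $\lambda$ might have its union fail to lie in $T$. To fix this, we choose the $t_i$ by following the $<_L$-least generic over $M_\alpha$, so the union is definable in $M_\alpha$'s successor and lies in $\mathrm{Lev}_\alpha$. This closure-at-limits step is the main obstacle; we handle it exactly as in \cite{lucke2016hurewicz}, by requiring $M_\alpha$ to contain the initial segment of the fusion sequence, which holds on a club by condensation.
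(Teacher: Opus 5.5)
\textbf{The construction has a genuine gap: the definition of the limit levels destroys slimness, and the fusion step contradicts it.}

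You put into $\mathrm{Lev}_\alpha(T)$ \emph{all} branches through $T{\upharpoonright}\alpha$ that are generic over $M_\alpha$. None of them lies in $M_\alpha$. If $b\in M_\alpha$, then $\{t\in T{\upharpoonright}\alpha\mid t\not\subseteq b\}$ is a dense set in $M_\alpha$ (every node splits), and $b$ misses it. So the bound $|\mathrm{Lev}_\alpha(T)|\le|M_\alpha|$ is unfounded, and the claim that the generic branches are definable over $M_\alpha$ is false.

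In fact there are too many generic branches. Take $\kappa=\omega_1$ and a countable limit $\alpha$, and grant your claim that every node extends to a generic branch. Then $[T{\upharpoonright}\alpha]$ is a perfect Polish space in which the $M_\alpha$-generic branches form a dense $G_\delta$. So there are $2^{\aleph_0}=\aleph_1$ of them, and $T$ is not even an $\omega_1$-tree, let alone slim.

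The specification of $M_\alpha$ is also incoherent. For slimness you want a level of $L$ in which $\alpha$ is already collapsed; no such level exists when $\alpha$ is an uncountable regular cardinal. For the Kurepa and Baire arguments you want the collapse of some $X\prec L_{\kappa^+}$ with $X\cap\kappa=\alpha$, and there $\alpha$ is regular.

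Your treatment of the limit stages of the fusion is self-defeating. If the union of the fusion sequence is definable over, or lies in, the model whose dense sets define $\mathrm{Lev}_\alpha(T)$, then it is not generic over that model, so it does not belong to $\mathrm{Lev}_\alpha(T)$. Citing \cite{lucke2016hurewicz} does not close this, because the tree there is not built from generic branches. Separately, a branch generic over $L_\gamma$ for $\gamma<\kappa^+$ exists precisely because of (1), not because of ${<}\kappa$-distributivity.

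\textbf{The missing idea is to drop genericity altogether.} Instead, bound the constrained levels by a single constructible level of size $|\alpha|$ that lies \emph{above} every collapse in which $\alpha$ is regular. The paper does this as follows.
\begin{enumerate-(i)}
\item It lets $\nu=\cof(\mu)$ if $\kappa=\mu^+$ and $\nu=\omega$ otherwise.
\item It lets $F(\alpha)$ be the least $\gamma>\alpha$ with $L_\gamma\models\ZFC^-$ and $\cof(\alpha)^{L_\gamma}=\nu$.
\item It takes $T$ to consist of all $t$ whose restrictions to limit lengths $\alpha\in\cof^\kappa_\nu$ lie in $L_{F(\alpha)}$.
\end{enumerate-(i)}

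With this definition:
\begin{enumerate-(a)}
\item Slimness is immediate, since $|\mathrm{Lev}_\alpha(T)|\le|F(\alpha)|=|\alpha|$ on the stationary set $\cof^\kappa_\nu$.
\item Property (2) holds for every $\alpha$, since each $L_{F(\alpha)}$ is closed under flipping one bit. Your automorphism argument is fine, but here it becomes trivial.
\item Property (1) is taken from \cite[Claim 7.2.2]{lucke2016hurewicz}, and the point of $F$ is this. If $X\prec L_{\kappa^+}$ and $\alpha=X\cap\kappa\in\cof^\kappa_\nu$, then $\alpha$ is regular in the transitive collapse $L_{\bar\beta}$ of $X$. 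Hence $\bar\beta<F(\alpha)$, and the union of a canonically chosen fusion sequence, being definable over $L_{\bar\beta}$, lies in $L_{F(\alpha)}$.
\item The Kurepa property follows from (1). Since $[T]$ has no isolated points, it is not a union of $\kappa$ singletons.
\end{enumerate-(a)}
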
 
\begin{proof}[Proof sketch] 
If $\kappa=\mu^+$ is a successor cardinal, let $\nu=\cof(\mu)$. 
Otherwise let $\nu=\omega$. 
Let $\cof_\nu^\kappa$ denote the set of ordinals $\alpha<\kappa$ with $\cof(\alpha)=\nu$. 
Let 
$$F(\alpha)= \min\{ \gamma \mid \alpha < \gamma <\kappa,\ L_\gamma\models \ZFC^-,\  \cof(\alpha)^{L_\gamma}=\nu \}$$ 
for $\alpha<\kappa$ with $\cof(\alpha)=\nu$ and 
$$ T = \{ t \in {}^{<\kappa}2 \mid \forall \alpha \in  \cof_\nu^\kappa\cap \Lim \ t{\upharpoonright}\alpha \in L_{F(\alpha)} \}. $$

$T$ is clearly a slim pruned $\kappa$-tree that satisfies \ref{lemma Kurepa tree in L 2}. 
The proof of \ref{lemma Kurepa tree in L 1} for $T$ is identical to that in \cite[Claim 7.2.2]{lucke2016hurewicz}. 
In particular, $T$ has at least $\kappa^+$ many branches, so it is a $\kappa$-Kurepa tree. 
\end{proof}

The next lemma shows that the range of any reduction of $G_0^\kappa$ contains a $\kappa$-perfect subset. 

\begin{lemma}\label{injective hom G0} 
Suppose that $G$ is a graph on $\tk$ and there exists a continuous homomorphism $f\colon \tk\rightarrow \tk$ from $G_0^\kappa$ to $G$. 
Then there exists a $\kappa$-perfect tree $T$ such that $f{\upharpoonright}[T]$ is injective. 
\end{lemma}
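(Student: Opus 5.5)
I will build a $\kappa$-perfect tree by a fusion argument, constructing an order- and incompatibility-preserving system $(t_s\mid s\in{}^{<\kappa}2)$ of nodes in ${}^{<\kappa}2$. Along with it I will build an auxiliary system $(u_s)$ in ${}^{<\kappa}2$ recording distinct initial segments of images. The aim is that for incompatible $s,s'$ the basic sets $f(N_{t_s})$ and $f(N_{t_{s'}})$ become separated by incompatible nodes $u_s\perp u_{s'}$ with $f(N_{t_s})\subseteq N_{u_s}$. Then $T=\{t\mid t\subseteq t_s\text{ for some }s\}$ is $\kappa$-perfect, and $f{\upharpoonright}[T]$ is injective: two distinct branches pass through some incompatible $t_s,t_{s'}$, hence have images in the disjoint sets $N_{u_s},N_{u_{s'}}$.

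\textbf{The key splitting step.} Given $t\in{}^{<\kappa}2$, choose $r\in S$ with $t\subseteq r$, using density of $S$, and let $\alpha=\len(r)$. Pick any $x\in\tk$. Then $(r^\smallfrown0^\smallfrown x,\,r^\smallfrown1^\smallfrown x)\in G_0^\kappa$. Since $f$ is a homomorphism into a graph, which is irreflexive, the two images are distinct. By continuity there are extensions $t_0\supseteq r^\smallfrown0$ and $t_1\supseteq r^\smallfrown1$ (initial segments of these two points) and incompatible $u_0,u_1$ with $f(N_{t_i})\subseteq N_{u_i}$.

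\textbf{Handling a whole level at once.} At successor stages I must separate all $2^{|\beta|}$ nodes of a level simultaneously, and also keep separations established earlier. Separation from earlier stages is inherited automatically, since $t_{s^\smallfrown i}\supseteq t_s$ and $f(N_{t_{s^\smallfrown i}})\subseteq f(N_{t_s})\subseteq N_{u_s}$. So it suffices, for each $s$ of the current level, to apply the splitting step to $t_s$, producing $t_{s^\smallfrown0},t_{s^\smallfrown1}$ with incompatible image nodes $u_{s^\smallfrown0},u_{s^\smallfrown1}$ extending $u_s$. Nodes with different parents are then already separated via their parents. So there is no need to uniformize across the level, and the construction does not need the uniformly splitting structure at all.

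\textbf{Limit stages.} For limit $\lambda$ I set $t_s=\bigcup_{\beta<\lambda}t_{s{\upharpoonright}\beta}$ and $u_s=\bigcup_{\beta<\lambda} u_{s{\upharpoonright}\beta}$, which lie in ${}^{<\kappa}2$ by regularity of $\kappa$. I must check $f(N_{t_s})\subseteq N_{u_s}$; this holds because $N_{t_s}\subseteq N_{t_{s{\upharpoonright}\beta}}$ for all $\beta<\lambda$. To avoid trouble at limits, I can simply define $u_s$ as any initial segment of images common to $f(N_{t_s})$, without requiring it to be maximal. Monotonicity is all the injectivity argument uses.

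The main, though mild, obstacle is ensuring that $\kappa$-perfectness is genuine. The tree must be ${<}\kappa$-closed, and every node must have a splitting extension; both follow from the construction taking unions at limits. Once this is in place, injectivity follows by the argument in the first paragraph.
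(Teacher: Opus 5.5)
Your proposal is correct and follows essentially the paper's argument: a fusion construction of order- and incompatibility-preserving systems $(t_s)$ and $(u_s)$ with $f(N_{t_s})\subseteq N_{u_s}$. Each $t_s$ is split at some $r\supseteq t_s$ in $S$ using the edge $(r^\smallfrown 0^\smallfrown x,\, r^\smallfrown 1^\smallfrown x)$, irreflexivity of $G$ and continuity, with unions taken at limits; the only cosmetic difference is that the paper fixes $x=0^\kappa$ and sets $t_{s^\smallfrown i}=r^\smallfrown i^\smallfrown 0^\beta$ for a common $\beta$, whereas you allow arbitrary initial segments $t_i\supseteq r^\smallfrown i$.
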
 
\begin{proof} 
We construct order preserving families $(t_s\mid s\in {}^{<\kappa}2)$ and $(u_s \mid s\in {}^{<\kappa}2)$ such that $f(N_{t_s})\subseteq N_{u_s}$ for all $s\in {}^{<\kappa}2$ and $(t_s\mid s\in {}^{<\kappa}2)$ is incompatibility preserving. 
Let $t_\emptyset=u_\emptyset=\emptyset$. 
In limit steps, we form unions. 
Hence the required properties are preserved. 
In successor steps, suppose that $\alpha<\kappa$ and $t_s$, $u_s$ have been constructed for all $s\in {}^\alpha 2$. 
Suppose that $s\in {}^\alpha 2$. 
Pick some $r\supseteq s$ in $S$ by density of $S$. 
Since $f$ is a reduction of $G_0^\kappa$ and $(r^\smallfrown0{}^\smallfrown 0^\kappa,r^\smallfrown 1{}^\smallfrown 0^\kappa)$ is an edge, there exist $\beta<\kappa$ and incompatible $u_{s^\smallfrown 0}$, $u_{s^\smallfrown 1}$ extending $u_s$ such that $f(N_{t_{r^\smallfrown i{}^\smallfrown 0^\beta}})\subseteq N_{u_{s^\smallfrown i}}$ for $i=0,1$. 
Let $t_{s^\smallfrown i}=r^\smallfrown i{}^\smallfrown 0^\beta$ for $i=0,1$. 
Finally, define $f\colon \tk\rightarrow \tk$ by letting $f(x)=\bigcup_{\alpha<\kappa} t_{x{\upharpoonright}\alpha}$. Then $f$ is as required. 
\end{proof} 

Note that for any $t\in {}^{<\kappa}2$, there is an edge of $G_0^\kappa$ contained in $N_t$. 
Therefore the range of a continuous homomorphism as in Lemma \ref{injective hom G0} is contained in the closure of the set of vertices of $G$ with at least one edge. 
It follows that for any $\kappa$-Kurepa subtree $T$ of ${}^{<\kappa}2$, any graph on $[T]$ that is not $\kappa$-Borel $\kappa$-colorable is a counterexample to the $G_0^\kappa$-dichotomy. 
For example, this holds for the complete graph on $[T]$. 

\begin{theorem}
\label{counterexample in L} 
Suppose that $V=L$ and $\kappa$ is an uncountable regular cardinal. 
Then there exists a pruned slim $\kappa$-Kurepa subtree of ${}^{<\kappa}2$ such that 
\begin{enumerate-(1)} 
\item\label{counterexample in L 1} 
$G_0^\kappa{\upharpoonright}[T]$ is a counterexample to the $G_0^\kappa$-dichotomy. 

\item\label{counterexample in L 2} 
There exists no continuous homomorphism from $G_0^\kappa{\upharpoonright}[T]$ to $G_{\mathrm{wo}}$. 

\end{enumerate-(1)} 
\end{theorem}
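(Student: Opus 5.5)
We take $T$ to be the tree from Lemma \ref{lemma Kurepa tree in L}, defined via the function $F$ and the stationary set $\cof_\nu^\kappa$. We then verify (1) and (2) separately, reusing the general results of Sections 2 and 3.

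For (1), we must show two things. First, $G_0^\kappa{\upharpoonright}[T]$ admits no $\kappa$-Borel $\kappa$-coloring. Second, there is no continuous homomorphism from $G_0^\kappa$ into $G_0^\kappa{\upharpoonright}[T]$.

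The second part is the easy one. By Lemma \ref{injective hom G0}, any such homomorphism would be injective on $[T']$ for some $\kappa$-perfect tree $T'$. Then $[T]$ would contain a $\kappa$-perfect subset, so $T$ would have a $\kappa$-perfect subtree. Slimness forbids this: a $\kappa$-perfect subtree has at least $2^{|\alpha|}$ nodes on level $\alpha$ on a club of limit $\alpha$ where splitting has been closed off. This contradicts $|\mathrm{Lev}_\alpha(T)|\le\alpha$ on a stationary set, since such a club meets the stationary set. Since $2^{|\alpha|}>|\alpha|$, this is immediate. (This is the remark after Lemma \ref{injective hom G0}.)

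For the non-colorability we invoke Lemma \ref{comeager not discrete} with the tree $T$. We need three facts about $T$:
\begin{enumerate-(i)}
\item $[T]$ is $\kappa$-Baire, which is Lemma \ref{lemma Kurepa tree in L}\ref{lemma Kurepa tree in L 1}.
\item $S$ is upward dense in $T$.
\item $T$ behaves like a uniformly splitting tree at the relevant levels.
\end{enumerate-(i)}

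Here is the main technical point. $T$ is not literally uniformly splitting, so I would rerun the proof of Lemma \ref{comeager not discrete} directly. The only things actually used there are two: below any node there is a node $u\in S\cap T$ such that $f_{\len(u)}$ maps $[T]$ homeomorphically onto itself, and this flip preserves $\kappa$-meagerness in $[T]$. Both follow from Lemma \ref{lemma Kurepa tree in L}\ref{lemma Kurepa tree in L 2}. I would first strengthen that item slightly, so that the cofinally many $t$ can be taken in $S$ and above any given node. The definition of $T$ makes membership depend only on $t{\upharpoonright}\alpha\in L_{F(\alpha)}$, and this is invariant under a single-digit flip, since $L_{F(\alpha)}\models\ZFC^-$ is closed under finite modifications. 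So in fact $f_\beta(T)=T$ for every $\beta$.

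Upward density of $S$ in $T$ is not automatic for an arbitrary $S$. Using the lemma on independence of the choice of $S$, I would fix $S$ to be chosen (in $L$) as a subset of $T$ meeting every cone of $T$ with at most one node per length. This is possible because $T$ has size $\kappa$, so a bookkeeping of length $\kappa$ works.

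Now suppose $c\colon[T]\to\kappa$ is a $\kappa$-Borel coloring. Its color classes are $\kappa$-Borel, hence have the $\kappa$-Baire property. Since $[T]$ is $\kappa$-Baire, some class is non-$\kappa$-meager. By the modified Lemma \ref{comeager not discrete}, that class is not independent, which is a contradiction.

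For (2) we apply Lemma \ref{lemma Baire trees and G0 counterexamples}. Hypothesis \ref{lemma Baire trees and G0 counterexamples a} follows from $\kappa$-Baireness of $[T]$, since $\lambda$-Baire is weaker for $\lambda<\kappa$. For hypothesis \ref{lemma Baire trees and G0 counterexamples b}, we have $f_\alpha(T)=T$, hence $f_\alpha(T_t)\subseteq T_{f_\alpha(t)}$. For $t$ of length $\alpha$, $f_\alpha$ fixes $t$ itself, because it changes only the digit at position $\alpha$, so $f_\alpha(T_t)\subseteq T_t$. Together with cofinality of $S\cap T$, this gives \ref{lemma Baire trees and G0 counterexamples b}.

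The main obstacle I expect is checking flip-invariance of $T$ at limit levels of cofinality $\nu$, together with the adaptation of Lemma \ref{comeager not discrete}. Everything else is direct citation.
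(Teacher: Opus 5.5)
Your proposal is correct and follows the paper's proof. Both use the tree from Lemma~\ref{lemma Kurepa tree in L}, get non-colorability from Lemma~\ref{comeager not discrete}, rule out a continuous homomorphism from $G_0^\kappa$ via Lemma~\ref{injective hom G0} plus the absence of $\kappa$-perfect subtrees in a slim $\kappa$-Kurepa tree, and obtain (2) from Lemma~\ref{lemma Baire trees and G0 counterexamples}; the paper cites \cite[Lemma 2.9]{lucke2016hurewicz} for the step from injectivity on $[T']$ to a $\kappa$-perfect subset of $[T]$, which you pass over. Your checks that $f_\beta(T)=T$ and that $S$ must be chosen with $S\cap T$ dense in $T$ fill in points the paper leaves implicit (only $S\cap T$ affects $G_0^\kappa{\upharpoonright}[T]$, so $S$ can stay dense in ${}^{<\kappa}2$ rather than being a subset of $T$); note, however, that $T$ is in fact uniformly splitting, since membership only constrains restrictions to limit levels and so every node has both immediate successors.
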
 

\begin{proof} 
Let $T$ be a tree as in Lemma \ref{lemma Kurepa tree in L}. 

\ref{counterexample in L 1} 
$G_0^\kappa{\upharpoonright}[T]$ does not admit a $\kappa$-Borel $\kappa$-coloring by Lemma \ref{comeager not discrete}. 
So suppose towards a contradiction that $f\colon \tk\rightarrow \tk$ is a continuous homomorphism from $G_0^\kappa$ to $G_0^\kappa{\upharpoonright}[T]$. 
Note that $\ran(f)\subseteq [T]$, since every pair $(x,x)$ with $x\in \tk$ is a limit of edges. 
There exists a $\kappa$-perfect subtree $U$ of ${}^{<\kappa}2$ such that $f{\upharpoonright}[U]$ is injective by Lemma \ref{injective hom G0}. 
Then $T$ contains a $\kappa$-perfect subtree by \cite[Lemma 2.9]{lucke2016hurewicz}. 
But this contradicts the fact that $T$ is a slim $\kappa$-Kurepa tree. 

\ref{counterexample in L 2} 
By Lemmas \ref{lemma Baire trees and G0 counterexamples} and \ref{lemma Kurepa tree in L}. 
\end{proof} 

Note that \ref{counterexample in L 2} shows that the statement of the $G_0^\kappa$-dichotomy remains false if $G_0^\kappa$ is replaced by $G$.


\section{A connection with Silver's dichotomy} 
\label{section a connection with Silver's dichotomy}

We will provide further counterexamples to the $G_0^\kappa$-dichotomy. 
Silver's dichotomy for an equivalence relation on $\tk$ states that either has at most $\kappa$ equivalence classes, or otherwise there exists a $\kappa$-perfect set of pairwise inequivalent elements of $\tk$. 
Silver's dichotomy is consistent for all $\kappa$-Borel equivalence relations on $\tk$ relative to $0^\sharp$ by a result of Friedman \cite[Theorem 8]{friedman2014consistency}. 
By Theorem \ref{theorem G0 dichotomy fails} the $G_0^\kappa$-dichotomy may fail at a much lower complexity than Silver's dichotomy. 
We now discuss how any counterexample to Silver's dichotomy gives rise to one for the $G_0^\kappa$-dichotomy. 
The counterexamples are stronger in the sense that the condition that the coloring is $\kappa$-Borel can be dropped. 
A subset $A$ of $(\tk)^n$ is called \emph{strongly $\kappa$-bianalytic} if it is both $\kappa$-analytic and $\kappa$-coanalytic and the definitions remain equivalent in $\Add(\kappa,1)$-generic extensions. 
For example, any $\kappa$-Borel set is strongly $\kappa$-bianalytic, since the canonical $\kappa$-analytic definition of a $\kappa$-Borel set (from the proof that $\kappa$-Borel sets are $\kappa$-analytic) is the same in all generic extensions. 

\begin{lemma}\label{bianalytic baire}
Any 
$\Delta^1_1$ subset of ${}^\kappa 2$ with parameters in the ground model has the $\kappa$-Baire property in any $\Add(\kappa,1)$-generic extension. 
\end{lemma}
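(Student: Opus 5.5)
We work in $V[g]$, where $g$ is $\Add(\kappa,1)$-generic over $V$. Fix a $\Delta^1_1$ subset $A$ of ${}^\kappa2$ in $V[g]$. Let $\varphi(x,a)$ be a $\kappa$-analytic definition and $\psi(x,b)$ a $\kappa$-coanalytic definition of $A$, with $a,b\in V$, such that $V[g]\models \forall x\,(\varphi(x,a)\leftrightarrow\psi(x,b))$. Write $\PP=\Add(\kappa,1)$ and let $\dot x$ be the canonical name for the generic subset of $\kappa$. The plan is to show that $A$ agrees with an explicit open set $U$ outside a $\kappa$-meager set, where $U$ is read off from a forcing relation over a small model.

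\textbf{Step 1 (persistence via homogeneity).} The equivalence $\forall x(\varphi\leftrightarrow\psi)$ is a statement about the ground-model parameters $a,b$ and holds in $V[g]$. Since $\PP$ is weakly homogeneous, it is therefore forced by $\1$ over $V$. Now take $\theta$ large and an elementary submodel $M\prec H_\theta$ of size $\kappa$ containing $a,b,\PP$, with ${}^{<\kappa}M\subseteq M$; this is possible since $\kappa^{<\kappa}=\kappa$. Let $\bar M$ be its transitive collapse. Then $\bar M$ satisfies the forcing theorem for $\PP$ (note that $\PP$ is not moved by the collapse) and $\bar M\models$ ``$\1\Vdash\forall x(\varphi\leftrightarrow\psi)$''. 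Hence for every $y$ that is $\PP$-generic over $\bar M$, we get $\bar M[y]\models\forall x(\varphi(x,a)\leftrightarrow\psi(x,b))$.

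\textbf{Step 2 (the open set).} In $V[g]$ define
\[
U_0=\bigcup\{N_t\mid t\Vdash^{\bar M}_{\PP}\varphi(\dot x,a)\}
\quad\text{and}\quad
U_1=\bigcup\{N_t\mid t\Vdash^{\bar M}_{\PP}\neg\varphi(\dot x,a)\}.
\]
Every condition has an extension deciding $\varphi(\dot x,a)$, so $U_0\cup U_1$ is open dense. Consequently its complement is nowhere dense.

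\textbf{Step 3 (agreement on generics).} Suppose $y\in V[g]$ is generic over $\bar M$.

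If $y\in U_0$, then $\bar M[y]\models\varphi(y,a)$. A witness to a $\kappa$-analytic statement, namely a branch through the relevant tree, persists to any outer model. Hence $V[g]\models\varphi(y,a)$, that is, $y\in A$.

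If $y\in U_1$, then $\bar M[y]\models\neg\varphi(y,a)$. By Step 1, $\bar M[y]\models\neg\psi(y,b)$. This is a $\kappa$-analytic statement, so it holds upward in $V[g]$, and therefore $y\notin A$.

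It follows that $A\triangle U_0$ is contained in the union of two sets: the set of $y$ not generic over $\bar M$, and the complement of $U_0\cup U_1$.

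\textbf{Step 4 (meagerness).} The non-generics form a $\kappa$-meager set. Indeed, $\bar M$ has only $\kappa$ many dense subsets $D$ of $\PP$, and each $D$ yields an open dense set $\bigcup_{t\in D}N_t$. The set of non-generics is the union of the complements of these $\kappa$ many open dense sets.

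Combining Steps 2–4, $A\triangle U_0$ is $\kappa$-meager, so $A$ has the $\kappa$-Baire property in $V[g]$.

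\textbf{Main obstacle.} The delicate step is Step 1. The equivalence of the two definitions is only given in $V[g]$, and it must be transferred to the intermediate models $\bar M[y]$. This is where homogeneity of $\PP$ and elementarity of $M$ are used. Working with a small model $\bar M$, rather than with $V$ itself, is necessary so that only $\kappa$ many dense sets have to be met in Step 4.
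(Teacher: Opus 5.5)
Your proof is correct, but it takes a different route from the paper's.

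\textbf{The paper's route.} The paper stays over $V$ and argues locally. For a basic open set $N_s$ it finds $t\supseteq s$ deciding $\dot x\in\dot X$. If $t\Vdash\dot x\in\dot X$, it takes a name $\sigma$ with $t\Vdash(\dot x,\sigma)\in[T]$ for the ground-model tree $T$, and lets $D_i$ be the set of conditions deciding $\sigma{\upharpoonright}i$. Any $y\in N_t$ meeting these $\kappa$ many dense sets need not be $V$-generic. Still, the values decided along $y$ cohere into a witness, so $X$ is $\kappa$-comeager in $N_t$. The case $t\Vdash\dot x\notin\dot X$ is handled symmetrically with the coanalytic definition. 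The Baire property then follows from the density of such $t$.

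\textbf{Your route.} You shrink $V$ to a transitive $\bar M$ of size $\kappa$, so that full $\bar M$-genericity is itself a $\kappa$-comeager condition. The forcing theorem over $\bar M$, together with upward absoluteness of $\kappa$-analytic statements from $\bar M[y]$ to $V[g]$, then does the work for both definitions.

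\textbf{What each buys.} Your version has three advantages. It produces the open set $U_0$ globally. It avoids assembling witnesses by hand. It also skips the final step from ``comeager or co-comeager on a dense family of $N_t$'' to the Baire property. Finally, it isolates exactly what is used: homogeneity plus upward absoluteness of both definitions. The paper's version, in turn, avoids elementary submodels closed under ${<}\kappa$-sequences and forcing over uncountable transitive models of $\ZFC^-$.

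Your closing remark that a small model is \emph{necessary} holds only for your strategy of demanding full genericity. The paper shows that meeting the $\kappa$ many dense sets that decide a witness name already suffices over $V$ itself.

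One detail is worth adding in Step 1: the collapse fixes $a$ and $b$, not only $\PP$. This holds because ${}^{<\kappa}M\subseteq M$ gives $\kappa\subseteq M$ and $H_\kappa\subseteq M$.
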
 
\begin{proof} The proof is similar to \cite[Theorem 49.7]{MR3235820}. 
Suppose that $X$ denotes the $\Delta^1_1$ set in the $\Add(\kappa,1)$-generic extension $V[G]$. 
Suppose that $s\in \Add(\kappa,1)$. 
It suffices to find $t\supseteq s$ such that $\dot{X}$ is dense in $N_t$ or ${}^\kappa 2$ is dense in $N_t$. 
To see this, pick an $\Add(\kappa,1)$-generic $x\supseteq s$ such that $V[G]=V[x]$ and let $\dot{X}$ be an $\Add(\kappa,1)$-name for $X$ relative to $x$. 
Let $\dot{x}$ be a name for the $\Add(\kappa,1)$-generic. 
Then there exists some $t$ with $s\subseteq t \subseteq x$ and $t\Vdash \dot{x}\in \dot{X}$ or $t\Vdash \dot{x}\notin \dot{X}$. 
First suppose that $t\Vdash \dot{x}\in \dot{X}$ 
and let $\dot{T}$ be a subtree of ${}^{<\kappa}2\times {}^{<\kappa}2$ such that $1_{\Add(\kappa,1)}$ forces $p[T]=\dot{X}$. 
Then there is a name $\sigma$ such that $t\Vdash (\dot{x},\sigma)\in [T]$. 
For each $i<\kappa$, let $D_i$ be the set of conditions $p\supseteq t$ deciding $\sigma{\upharpoonright}i$. 
Then $D_i$ is open and dense below $t$ for all $i<\kappa$, so  $\bigcap_{i<\kappa} D_i$ is $\kappa$-comeager in $N_t$ and contained in $X$. 
The case $t\Vdash \dot{x}\notin \dot{X}$ is analogous. 
\end{proof}

\begin{definition}
    A $\kappa$-Cantor scheme on a set $X$ is a family $(A_s)_{s \in {}^{<\kappa} 2}$ such that for all $s,t \in {}^{<\kappa}2$: 

    \begin{enumerate-(1)}
        \item 
        $A_{s^\frown 0}  \cap A_{s^\frown 1} = \emptyset$.

        \item 
         If $s \subseteq t$ then $A_t
        \subseteq A_s$. 
    \end{enumerate-(1)}
    We say that the scheme has \emph{vanishing diameter} if $\vert  \bigcap_{\alpha < \kappa} A_{x \restriction \alpha}  \vert =1$ for all $x \in \tk$. 
\end{definition}

We use the following weaker form of $\diamondsuit_\kappa$ from \cite[Definition 5.4.1]{schlicht2023open}. 

\begin{definition}
$\lozenge^i_\kappa$ asserts the existence of a sequence $\langle \mathcal{A}_\alpha \mid \alpha < \kappa \rangle$ of families $\mathcal{A}_\alpha \subseteq {}^\alpha \kappa$ such that

\begin{enumerate-(a)}
    \item $\vert \mathcal{A}_\alpha \vert < \kappa$ for all $\alpha < \kappa$, and 

    \item for all $y \in {}^\kappa \kappa$, the set $\{ \alpha < \kappa \mid y \restriction \alpha \in \mathcal{A}_\alpha \}$ is stationary in $\kappa$.
\end{enumerate-(a)}
\end{definition} 

Note that $\lozenge^i_\kappa$ holds in particular at every inaccessible $\kappa$. 
Friedman, Miller, Motto Ros and T\"ornquist extended Mycielski's theorem to ${}^\kappa2$ assuming $\diamondsuit_\kappa$ \cite{millersuslinspaces}. 
We slightly strengthen their result by replacing $\diamondsuit_\kappa$ with $\diamondsuit^i_\kappa$. 
For $\lambda<\kappa$, let $\Delta_\lambda$ denote the set of all non-injective sequences $x \in {}^\lambda({}^\kappa 2)$.  

\begin{proposition}
\label{mycielski}
Assume $\lozenge^i_\kappa$. Then for any $\lambda < \kappa$ and any comeager $R \subseteq {}^{{}^\lambda} (\tk) $, there exists a $\kappa$-perfect subset $C \subseteq \tk$ such that ${}^\lambda C \setminus \Delta_\lambda \subseteq R$. 
\end{proposition}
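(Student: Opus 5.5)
We will prove the statement by a fusion construction of length $\kappa$, in the spirit of the classical proof of Mycielski's theorem and of \cite{millersuslinspaces}. We use the $\lozenge^i_\kappa$-sequence to anticipate, at stationarily many stages, the ${<}\kappa$-sized pieces of the data that the limit construction must respect.

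\emph{Setup.} Write $R\supseteq\bigcap_{i<\kappa}D_i$, where each $D_i$ is open dense in ${}^\lambda(\tk)$, which carries the product topology with ${<}\kappa$-supports (the bounded topology). We build a $\kappa$-Cantor scheme $(t_s\mid s\in{}^{<\kappa}2)$ of nodes $t_s\in{}^{<\kappa}2$ that preserves order and incompatibility and is continuous at limits: $t_s=\bigcup_{\alpha<\len(s)}t_{s\upharpoonright\alpha}$. Then $C=\{\bigcup_\alpha t_{x\upharpoonright\alpha}\mid x\in\tk\}$ is $\kappa$-perfect, and we identify $C$ with $\tk$ via $x\mapsto\bigcup_\alpha t_{x\upharpoonright\alpha}$.

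\emph{Requirement.} Fix $y\in{}^\lambda C$ that is injective. It comes from an injective sequence $(x_j\mid j<\lambda)$ of branches. Since $\lambda<\kappa$ and $\kappa$ is regular, there is some $\alpha_0<\kappa$ such that the restrictions $x_j\upharpoonright\alpha_0$ are pairwise distinct. We want that for every $i<\kappa$ there is some $\alpha$ such that the basic open box $\prod_{j<\lambda}N_{t_{x_j\upharpoonright\alpha}}$ is contained in $D_i$. The difficulty is that at stage $\alpha$ there are $2^\alpha$ many nodes on level $\alpha$, possibly $\kappa$ many when $2^\alpha$ is large. There are then up to $\kappa^\lambda$ many $\lambda$-tuples of distinct nodes, and we cannot handle all of them at one successor step while keeping lengths below $\kappa$. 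Moreover, the branches $x_j$ are only determined at the end.

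\emph{Use of $\lozenge^i_\kappa$.} Code a $\lambda$-tuple of branches $(x_j)_{j<\lambda}$ as a single $z\in{}^\kappa\kappa$, for instance $z(\beta)$ codes $(x_j(\beta))_{j<\lambda}$ via a fixed bijection ${}^\lambda 2\to\kappa$, which exists since $\kappa^{<\kappa}=\kappa$. At a limit stage $\alpha$ with $\alpha>\lambda$, consider only the ${<}\kappa$ many guesses $a\in\mathcal A_\alpha$ that decode to a $\lambda$-tuple of pairwise distinct nodes $(s_j)_{j<\lambda}$ in ${}^\alpha2$. For each such guess and each $i<\alpha$, we extend the nodes $t_{s_j}$ so that the resulting box lies in $D_i$. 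We handle the ${<}\kappa$ many pairs (guess, $i$) one after another in a construction of length less than $\kappa$. Each time, we extend all nodes of the level uniformly: we extend $t_{s_j}$ to meet $D_i$ and extend the other nodes arbitrarily, taking unions at limits of this internal iteration. This works because $D_i$ is open dense and a box with ${<}\kappa$ coordinates can always be shrunk into $D_i$. Limits of this internal sequence stay below $\kappa$ because the number of steps is less than $\kappa$ and $\kappa$ is regular. At stage $\alpha+1$ we then split each node into two incompatible extensions.

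\emph{Verification.} Given an injective $(x_j)_{j<\lambda}$ and $i<\kappa$, the set of $\alpha$ at which $z\upharpoonright\alpha\in\mathcal A_\alpha$ is stationary. We pick such an $\alpha$ above $\alpha_0$ and above $i$, with $\alpha$ a limit. At that stage the tuple $(x_j\upharpoonright\alpha)_{j<\lambda}$ was handled, so the box $\prod_{j<\lambda} N_{t_{x_j\upharpoonright\alpha+1}}$ is contained in $D_i$. It follows that $y\in\bigcap_i D_i\subseteq R$.

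The main obstacle is the bookkeeping at stage $\alpha$. The internal iteration must be shown to have length less than $\kappa$, and the guesses must concern the index tuples $s$ and not the nodes $t_s$, which the construction itself is still defining. Level-$\alpha$ nodes must therefore be indexed by ${}^\alpha2$ so that the guess is meaningful before the $t_s$ are fixed. We also need that only one stationary hit per pair $(y,i)$ is required, which holds here.
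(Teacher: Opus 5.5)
Your proposal is correct and follows essentially the same route as the paper: a fusion construction of a perfect embedding in which, at stages $\alpha$ where the $\lozenge^i_\kappa$-family $\mathcal A_\alpha$ guesses injective $\lambda$-tuples of level-$\alpha$ indices (rather than nodes), an inner iteration of length ${<}\kappa$ shrinks the corresponding boxes into the dense open sets. The differences are only cosmetic. You handle all $D_i$ with $i<\alpha$ instead of the single set $D_\alpha$ of a decreasing sequence. You also code tuples pointwise via an injection ${}^\lambda2\to\kappa$ rather than restricting to closure points $\alpha=\lambda\cdot\alpha$; a bijection need not exist, e.g.\ for inaccessible $\kappa$, but an injection suffices.
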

\begin{proof}
Let $( \mathcal{A}_\alpha \mid \alpha < \kappa )$  be a $\lozenge^i_\kappa$-sequence, and let $\langle D_\alpha \mid \alpha < \lambda \rangle$ be a decreasing sequence of open dense sets with $D_0 = \tk$ and such that $\bigcap_{\alpha <\lambda} D_\alpha \subseteq R$. 

By recursion on the levels of ${}^{<\kappa}2$, we construct a Cantor scheme $(U_s)_{s \in  {}^{<\kappa} 2}$ with vanishing diameter made of basic open sets. In particular, we let $U_s = N_{h(s)}$ for an order and incompatibility preserving function $h: {}^{< \kappa} 2 \to {}^{<\kappa} 2 $. 
If $\alpha = \lambda \cdot \alpha$ for some $\alpha < \kappa$, then we read off any $s \in \mathcal{A}_\alpha$ as $(s_\gamma)_{\gamma < \nu} \in {}^{{}^\nu} ({}^\alpha 2)$ for some $\nu<\kappa$. 
We enumerate $\mathcal{A}_\alpha$ as $(s^{\alpha,\beta})_{\beta < \nu_\alpha}$ and write $s^{\alpha,\beta}=(s_\gamma^{\alpha,\beta})_{\gamma < \lambda} \in {}^{{}^\lambda} ({}^\alpha 2)$. 
For such $\alpha$'s, we can moreover ensure that for every  $s \in \mathcal{A}_\alpha$ that reads as an injective sequence $(s_\gamma)_{\gamma < \lambda}$, we have $\prod_{\gamma < \lambda} U_{s_\gamma} \subseteq D_\alpha$.

At successors $\alpha=\delta+1$, pick arbitrary incompatible extensions $h(s^\smallfrown 0)$ and $h(s^\smallfrown 1)$ of $h(s)$ for each $s\in {}^\delta 2$, and form unions at limits $\alpha$ that do not satisfy $\alpha=\lambda\cdot\alpha$.  

Suppose that $\alpha=\lambda \cdot \alpha$ and the above properties are preserved for all levels $< \alpha$. 
Let $h^*_0(s)=\bigcup_{r\subsetneq s} h(r)$ for all $s\in {}^\alpha 2$. 
We will iteratively define increasing sequences $(h^*_\beta(s)\mid \beta<\nu_\alpha)$ for all $s\in {}^\alpha 2$ such that for each $\beta<\nu_\alpha$, we have $\prod_{\gamma < \lambda} N_{h^*(s^{\alpha,\beta}_\gamma)} \subseteq D_\alpha$. 
In limit steps, we form unions. 
In successor steps, suppose that $h^*_\beta(s)$ has been defined for all $s\in {}^\alpha 2$. 
Let $h^*_{\beta+1}(s)=h^*_\beta(s)$ for all $s\in {}^\alpha 2$ such that $s\neq s^{\alpha,\beta}_\gamma$ for all $\gamma<\lambda$. 
Using the density of $D_\alpha$, find $h^*_{\beta+1}(s^{\alpha,\beta}_\gamma)\leq h^*_\beta(s^{\alpha,\beta}_\gamma)$ for all $\gamma<\lambda$ such that $\prod_{\gamma < \lambda} N_{h^*_{\beta+1}(s^{\alpha,\beta}_\gamma)} \subseteq D_\alpha$. 
Then $h(s)=\bigcup_{\beta<\nu_\alpha} h^*_\beta(s)$ for each $s\in {}^\alpha 2$ is as required. 

Let $f(x)=\bigcup_{\alpha<\kappa} h(x{\upharpoonright} \alpha)$ for $x\in {}^\kappa 2$. 
Then $C=\ran(f)$ is $\kappa$-perfect. 
The application of the $\diamondsuit^i_\kappa$-sequence ensures that any pair $(x,y)\in C$ with $x\neq y$ is considered unboundedly often. 
Since $\langle D_\alpha \mid \alpha<\lambda\rangle$ is decreasing, this ensures ${}^\lambda C \setminus \Delta\subseteq R$. 
\end{proof}

To any binary relation $E$ on ${}^\kappa2$, we associate the graph $G_E=({}^\kappa2\times {}^\kappa2) \setminus E$.

\begin{lemma}\label{theorem bianalytic Silver G0} 
Suppose that $E$ is a strongly $\kappa$-bianalytic equivalence relation on $\tk$ that contradicts Silver's dichotomy in some $\Add(\kappa,1)$-generic extension. 
Then $G_E$ is a counterexample to the $G_0^\kappa$-dichotomy. 
\end{lemma}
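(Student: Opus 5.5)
The plan is to check the two halves of the failure of the dichotomy separately. For each half we transfer the relevant statement to an $\Add(\kappa,1)$-generic extension $V[G]$ in which $E$ violates Silver's dichotomy. There $E$ has more than $\kappa$ classes, but there is no $\kappa$-perfect set of pairwise $E$-inequivalent elements. The basic observation is that a set is $G_E$-independent if and only if its elements are pairwise $E$-equivalent. So an independent set lies inside a single $E$-class.

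\textbf{No $\kappa$-Borel $\kappa$-coloring.} Suppose towards a contradiction that $c\colon \tk\rightarrow\kappa$ is a $\kappa$-Borel coloring of $G_E$ in $V$. Then every color class lies in one $E$-class, so $E$ has at most $\kappa$ classes. Code $c$ by a subset of $\kappa$, as in the proof of Proposition \ref{no selector for E0kappa}. The statement "for all $x,y\in\tk$, if $c(x)=c(y)$ then $(x,y)\in E$" is then $\Pi^1_1(\tk)$ in the code. Here we use that $E$ is $\kappa$-coanalytic via a definition that keeps defining $E$ in $\Add(\kappa,1)$-generic extensions, which is the point of strong bianalyticity. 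By $\Pi^1_1(\tk)$-absoluteness for ${<}\kappa$-closed forcing \cite[Proposition 7.3]{lucke2012definability}, the reinterpreted $c$ still sends inequivalent pairs to distinct colors in $V[G]$. Hence $E$ has at most $\kappa$ classes in $V[G]$, which is a contradiction.

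\textbf{No continuous homomorphism from $G_0^\kappa$.} Suppose $f$ is a continuous homomorphism from $G_0^\kappa$ to $G_E$. Since $E$ is $\kappa$-analytic with a definition that persists, "$(x,y)\in G_0^\kappa\Rightarrow (f(x),f(y))\notin E$" is again $\Pi^1_1$ in the codes of $f$ and $S$. So $f$ remains a homomorphism in $V[G]$. Work in $V[G]$ and put $F=(f\times f)^{-1}(E)$. This is an equivalence relation that is $\Delta^1_1$ with ground model parameters, by strong bianalyticity. Each $F$-class is $G_0^\kappa$-independent, because $f$ maps it into one $E$-class and $E$-equivalent points are not $G_E$-adjacent. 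By Lemma \ref{bianalytic baire}, each $F$-class has the $\kappa$-Baire property. Lemma \ref{comeager not discrete} with $T={}^{<\kappa}2$ then shows that each $F$-class is $\kappa$-meager.

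Next I want $F$ itself to be $\kappa$-meager in $\tk\times\tk$; applying Lemma \ref{bianalytic baire} to $F$ gives it the $\kappa$-Baire property. I would derive meagerness from a Kuratowski--Ulam type argument for $\kappa$-Baire-property sets. Alternatively, I would argue directly by forcing, following the proof of Lemma \ref{bianalytic baire}: if $F$ were comeager in some $N_s\times N_t$, then a mutually $\Add(\kappa,1)$-generic pair would be $F$-equivalent. Hence some class would be nonmeager in $N_s$, which is a contradiction. So $R=(\tk\times\tk)\setminus F$ is comeager. Now $\Add(\kappa,1)$ adds a $\diamondsuit_\kappa$-sequence, so $\lozenge^i_\kappa$ holds in $V[G]$. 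Proposition \ref{mycielski} with $\lambda=2$ yields a $\kappa$-perfect $C$ whose distinct elements are pairwise $F$-inequivalent. Then $f{\upharpoonright}C$ is injective, and $f[C]$ consists of pairwise $E$-inequivalent points. Since $C$ is $\kappa$-perfect, $f[C]$ contains a $\kappa$-perfect subset, by \cite[Lemma 2.9]{lucke2016hurewicz} as used in Theorem \ref{counterexample in L}. This contradicts the failure of Silver's dichotomy for $E$ in $V[G]$.

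The main obstacle is the step showing that $F$ is $\kappa$-meager from the fact that its classes are. This needs a $\kappa$-version of Kuratowski--Ulam, or the mutual genericity argument, and care with the complexity of $F$ in $V[G]$. The remaining absoluteness checks are routine.
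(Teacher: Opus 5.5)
Your proposal is correct and is essentially the paper's proof. Both pass to the $\Add(\kappa,1)$-extension, show $(f\times f)^{-1}(E)$ is $\kappa$-meager via Lemma \ref{comeager not discrete} and a $\kappa$-Kuratowski--Ulam argument, then apply Proposition \ref{mycielski} and \cite[Lemma 2.9]{lucke2016hurewicz}. The paper rules out all $\kappa$-colorings because $E$ already has $\kappa^+$ classes in $V$; you rule out only $\kappa$-Borel ones, which suffices.

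One small correction: Lemma \ref{bianalytic baire} needs ground-model parameters, so it does not give the $\kappa$-Baire property of classes $[a]_F$ with $a\notin V$. Instead take it for comeager many sections from Kuratowski--Ulam applied to $F$ itself, as the paper does; that is all your meagerness argument needs.
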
 
\begin{proof} 
Write $G=G_E$. 
Since $E$ has at least $\kappa^+$ many equivalence classes, $G$ is not $\kappa$-colorable. 
Towards a contradiction, suppose that the $G_0^\kappa$-dichotomy holds for $G$. 
Then there exists a continuous homomorphism $f\colon \tk\rightarrow \tk$  from  $G_0^\kappa$ to $G$. 
The canonical extension of $f$ remains a homomorphism from $G_0^\kappa$ to $G$ in any $\Add(\kappa,1)$-generic extensions by $\Pi^1_1(\tk)$-absoluteness \cite[Proposition 7.3]{lucke2012definability}. 
Now suppose that $x\in \tk$ is $\Add(\kappa,1)$-generic over $V$. 
From now on work in $V[x]$ and write $f, E, G$ for the canonical extensions. 
The next claim suffices, since $E$ was assumed to contradict Silver's dichotomy in $V[x]$. 

\begin{claim*} 
There exists a $\kappa$-perfect $G$-clique. 
\end{claim*} 
\begin{proof} 
The inverse image $G^f=(f\times f)^{-1}(G)$ is an equivalence relation with the $\kappa$-Baire property by Lemma \ref{bianalytic baire}. 
Note that the Kuratowski-Ulam theorem generalizes to ${}^\kappa2$ assuming $\kappa^{<\kappa}=\kappa$ by virtually the same proof 
\cite[Theorem 8.41]{kechris2016descriptive}. 
We first claim that $G^f$ is $\kappa$-comeager. 
Otherwise, by the analogue to the Kuratowski-Ulam theorem for $2^\kappa$, 
there exists some $a\in 2^\kappa$ such that the slice $G^f_a$ has the $\kappa$-Baire property but is not $\kappa$-comeager. 
Since $2^\kappa\setminus G^f_a$ is not $G_0^\kappa$-independent by Lemma \ref{comeager not discrete}, there exist $b,c\in \tk\setminus G^f_a$ with $(b,c)\in G_0^\kappa$. 
Since $\tk\setminus G^f_a$ is an equivalence class of $(\tk\times \tk)\setminus G_f$, 
$(b,c) \notin G^f$ and $(f(b),f(c))\notin G$. 
But this contradicts the fact that $(b,c)\in G_0^\kappa$ and $f$ is a homomorphism from $G_0^\kappa$ to $G$. 
Since $G^f$ is $\kappa$-comeager and $\diamondsuit_\kappa$ holds, 
Proposition \ref{mycielski} applied to $G^f$ produces a $\kappa$-perfect set $C$ of pairwise $G^f$-connected elements of $\tk$. 
For all distinct $x, y$ in $C$, we have $(f(x),f(y))\in G$. 
In particular, $f$ is injective and hence $f[C]$ contains a $\kappa$-perfect set by the argument in  \cite[Lemma 2.9]{lucke2016hurewicz}. 
\end{proof} 
This completes the proof of the lemma. 
\end{proof}

The next two propositions yield further counterexamples to the $G_0^\kappa$-dichotomy via Theorem \ref{theorem bianalytic Silver G0}.

\begin{proposition}
\label{prewellorder counterexample} 
There exists a strongly $\kappa$-bianalytic pre-wellorder on $\tk$ of length $\kappa^+$ that induces a counterexample $E$ to Silver's dichotomy in any $\Add(\kappa,1)$-generic extension. 
In particular, $G_E$ is a counterexample to the $G_0^\kappa$-dichotomy. 
\end{proposition}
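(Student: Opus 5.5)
The natural candidate is the pre-wellorder by order type of well-orders coded by elements of $\tk$. Fix a bijection $\pi\colon\kappa\times\kappa\to\kappa$. Let $W$ be the set of $x\in\tk$ coding a well-order $<_x$ on a subset of $\kappa$, where $\alpha<_x\beta$ iff $x(\pi(\alpha,\beta))=1$. Define $x\preceq y$ iff either $x\notin W$, or both $x,y\in W$ and $\mathrm{otp}(<_x)\le\mathrm{otp}(<_y)$. Non-codes form the bottom class. Since every order type below $\kappa^+$ is realized on a subset of $\kappa$, this is a pre-wellorder of length $\kappa^+$ (up to the extra bottom point). The induced equivalence relation is $E$: $x\mathrel{E}y$ iff both are non-codes, or both are codes of equal order type.

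The first step is to show that $\preceq$ is strongly $\kappa$-bianalytic. For analyticity, observe that $\mathrm{otp}(<_x)\le\mathrm{otp}(<_y)$ with $x\in W$ is witnessed by an order-preserving map from $\dom(<_x)$ onto an initial segment of $<_y$; existence of such a $g\in{}^\kappa\kappa$ is a $\Sigma^1_1$ condition in $(x,y)$. The part ``$x\in W$'' is not $\Sigma^1_1$ in general, since well-foundedness is $\Pi^1_1$. I would handle this as in the classical proof that $\le_{\mathrm{WO}}$ restricted to a bounded part is $\Delta^1_1$, using the following key observation.

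For $\kappa$-sized well-orders, ill-foundedness of a linear order on $\kappa$ means there is a descending $\omega$-sequence. This is a $\Sigma^1_1$ condition, even a $\kappa$-Borel one: it is an existential quantifier over ${}^\omega\kappa$, which is a set of size $\kappa^{\omega}\le\kappa^{<\kappa}=\kappa$. So it can be written as a union of $\kappa$ many closed conditions. Hence $W$ is $\kappa$-Borel, since linearity is closed and well-foundedness is the complement of a size-$\kappa$ union. The comparison $\preceq$ is then the union of the $\kappa$-Borel set $(\tk\setminus W)\times\tk$ with a set given both by the $\Sigma^1_1$ definition ``there is an embedding onto an initial segment'' and by the $\Pi^1_1$ definition ``there is no embedding of $<_y$ onto a proper initial segment of $<_x$''. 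On $W\times W$ these agree by comparability of well-orders. The equivalence of the two definitions holds in every model containing $x,y$, hence also in $\Add(\kappa,1)$-generic extensions, where $W$ is still defined by the same formula and $\kappa^{<\kappa}=\kappa$ persists. This gives strong bianalyticity. The same reasoning applies to $E$ as the conjunction of $\preceq$ and its converse.

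The second step is to show that in $V[x]$ for $x$ $\Add(\kappa,1)$-generic, $E$ fails Silver's dichotomy. It has $\kappa^+$ classes, since $\kappa^+$ is preserved by the ${<}\kappa$-closed, $\kappa^+$-c.c.\ forcing. So it suffices that there is no $\kappa$-perfect set of pairwise $E$-inequivalent elements. Suppose $C=[U]$ is such a set. Then $\mathrm{otp}$ is injective on $C\cap W$, and all but one point of $C$ lies in $W$. This gives $2^\kappa$ many distinct ordinals below $\kappa^+$, which contradicts $2^\kappa>\kappa^+$ only if that holds. So instead I would use a boundedness argument. The set $C\cap W$ is $\kappa$-Borel and analytic, and $\kappa$-analytic subsets of $W$ have order types bounded below $\kappa^+$; this generalizes the classical boundedness and holds for $\kappa$ with $\kappa^{<\kappa}=\kappa$. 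A $\kappa$-perfect set of codes with order types in some $\delta<\kappa^+$ then has $2^\kappa$ distinct order types below $\delta$. But there are only $\kappa$ ordinals below $\delta$, a contradiction since $2^\kappa>\kappa$.

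I expect the boundedness step to be the main obstacle. Generalized $\Sigma^1_1$-boundedness can fail for $\kappa>\omega$; for instance, $\{x : <_x$ is well-founded$\}$ being Borel already suggests that boundedness is problematic. I would first try to argue directly. Since $C\cap W$ is $\kappa$-Borel, a $\kappa$-Borel set of codes has a code in some $L[a]$-type substructure of size $\kappa$. Taking an elementary submodel $M\prec H_{\kappa^+}$ of size $\kappa$ containing $U$, I would show every order type realized in $C$ lies below $M\cap\kappa^+$, by noting that $\mathrm{otp}(<_y)$ for $y\in[U]$ is determined definably from $U$ and $y$. If this fails, I would fall back on a simpler route: since $V[x]$ contains many Cohen reals, there are $\kappa$-perfect sets of mutually generic codes, and I would get the contradiction via a genericity argument in the style of Proposition~\ref{no selector for E0kappa}. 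Finally, the ``in particular'' clause follows immediately from Lemma~\ref{theorem bianalytic Silver G0}.
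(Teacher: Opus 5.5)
\paragraph{Verdict.} Your proposal has a genuine gap in the second step. The first step is fine: $W$ is even closed, since descending $\omega$-sequences are bounded below $\kappa$ when $\cof(\kappa)>\omega$. The appeal to Lemma \ref{theorem bianalytic Silver G0} at the end is also fine.

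\paragraph{The gap.} What is missing is the central claim: in $V[x]$ there is no $\kappa$-perfect set $C$ of pairwise $E$-inequivalent points. Your three attempts at it all fail.
\begin{enumerate-(1)}
\item The boundedness principle you invoke is false. $W$ itself is $\kappa$-Borel, as you showed, and it realizes every order type below $\kappa^+$.
\item The elementary-submodel repair does not work. Knowing that $\mathrm{otp}(<_y)$ is definable from $U$ and $y$ gives no bound when $y\notin M$. Moreover, the argument uses nothing about $U$ beyond $[U]\subseteq W$, so it would equally ``prove'' boundedness for the tree of $W$, where the conclusion fails.
\item The fallback via mutually generic codes does not address an arbitrary $\kappa$-perfect $C$ in $V[x]$. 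That $C$ need not consist of generics.
\end{enumerate-(1)}

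\paragraph{How the paper proves it.} The paper avoids boundedness altogether. A $\kappa$-perfect $C$ with pairwise distinct order types is homeomorphic to $\tk$. The pre-wellorder restricted to $C$ then becomes a $\Delta^1_1$ wellorder of $\tk$. By Lemma \ref{bianalytic baire}, this wellorder has the $\kappa$-Baire property in $V[x]$. That is impossible: take the least non-$\kappa$-meager initial segment; all of its sections are $\kappa$-meager, which contradicts the analogue of Kuratowski--Ulam.

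\paragraph{How your route could be rescued.} Your boundedness idea can be made to work for $\kappa$-perfect sets, but only by using the ${<}\kappa$-closure of the perfect tree $U$. That closure is exactly what separates $[U]$ from $W$.
\begin{enumerate-(1)}
\item Suppose $[U]\subseteq W$ and the order types on $[U]$ are unbounded in $\kappa^+$.
\item Call a pair $(u,\alpha)\in U\times\kappa$ \emph{good} if the $<_y$-ranks of $\alpha$, for branches $y\supseteq u$, are unbounded in $\kappa^+$.
\item There are only $\kappa$ such pairs and $\kappa^+$ is regular. So unboundedness gives a good pair $(\emptyset,\alpha_0)$.
\item By the same pigeonhole argument, each good $(u_n,\alpha_n)$ yields a good $(u_{n+1},\alpha_{n+1})$ with $u_n\subseteq u_{n+1}$ and $u_{n+1}$ deciding $\alpha_{n+1}<\alpha_n$.
\item Then $\bigcup_n u_n\in U$ by closure, and every branch through it codes an ill-founded relation, a contradiction.
\end{enumerate-(1)}
Hence order types on any $\kappa$-perfect subset of $W$ are bounded below $\kappa^+$, so pairwise distinct order types force $|C|\le\kappa$. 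Before applying this, pass to a $\kappa$-perfect subset of $C$ that avoids its possible single non-code.
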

\begin{proof} 
We pre-wellorder the elements of $\tk$ coding wellorders via a pairing function on $\kappa$ by comparing their order types. 
Since wellfoundedness and order type are absolute, the $\kappa$-analytic and $\kappa$-coanalytic definitions agree in any outer model. 
Moreover, in any $\Add(\kappa,1)$-generic extension, there is no perfect set of elements of $\tk$ of pairwise different order types. 
This would induce a wellorder of $\tk$ with the $\kappa$-Baire property by Lemma \ref{bianalytic baire}, but such wellorders cannot exist by an argument analogous to the one for $\omega$ in \cite[Theorem 8.48]{kechris2016descriptive}. 
In detail, one works with the least non-$\kappa$-meager initial segment and notes that the slices are all $\kappa$-meager, thus arriving at a contradiction via the analogue to the Kuratowski-Ulam theorem for ${}^\kappa 2$. 
\end{proof}

The next proposition yields a different kind of counterexample in $L$ derived from a long $\kappa$-Borel wellorder. 
By the induced equivalence relation, we mean equality on the set and a single equivalence class for the rest. 

\begin{proposition}
\label{counterexample wellorder in L} 
Suppose that $V=L$ and $\kappa$ is an uncountable regular cardinal. 
There exists a $\kappa$-Borel wellorder of length $\kappa^+$ on a $\kappa$-Borel set that does not acquire new elements in outer models. 
If $E$ denotes the induced equivalence relation, then $G_E$ is a counterexample to the $G_0^\kappa$-dichotomy. 
\end{proposition}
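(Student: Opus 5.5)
In $L$ we will take the canonical wellorder $<_L$ restricted to a suitable $\kappa$-Borel set of codes. Let $X$ be the set of $x\in\tk$ that code (via a pairing function on $\kappa$) a structure $(L_\gamma,\in)$ with $\kappa\le\gamma<\kappa^+$, \emph{and} are the $<_L$-least such code, e.g.\ the code canonically obtained from $\gamma$ by a $\Sigma_1$-definable collapse inside $L_{\gamma'}$ for the least suitable $\gamma'>\gamma$. Wellfoundedness is only $\kappa$-coanalytic in general, so we restrict the codes further. We require that $x$ codes an $L_\gamma$ together with a witness that its $\in$-relation is isomorphic to some $L_\gamma$ whose own internal structure certifies the coding; concretely, that $x$ codes $L_\gamma$ where $L_\gamma$ projects to $\kappa$ and $x$ is the $L$-least code definable over the next admissible level. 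The standard fine-structural fact in $L$ is that such "canonical codes" form a $\kappa$-Borel set. Membership can be checked by a $\kappa$-length recursion over $x$, since every element of $L_{\gamma}$ of this kind is determined by a $\Sigma_1$ Skolem hull of $\kappa$.

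The wellorder is $x\prec y$ iff the ordinal coded by $x$ is smaller than that coded by $y$. On codes of this kind this is decided by asking whether $x$ codes an initial segment of the model coded by $y$. That question is $\kappa$-Borel (both analytic and coanalytic), so $\prec$ is $\kappa$-Borel. The length is $\kappa^+$, since there are cofinally many $\gamma<\kappa^+$ with $L_{\gamma+1}$ containing a new subset of $\kappa$.

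Next we show $X$ acquires no new elements in outer models $W\supseteq L$. Any $x\in X^W$ codes a wellfounded model, by absoluteness of the defining condition, which in particular forces wellfoundedness. That model is therefore some $L_\gamma$. Its canonical code is then computed in $L$, so $x\in L$. In particular in any $\Add(\kappa,1)$-generic extension $X$ is the same set of size $\kappa^+$, and the definitions of $X$ and $\prec$ remain equivalent there, so the induced $E$ (equality on $X$, one class for $\tk\setminus X$) is strongly $\kappa$-bianalytic.

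Finally we apply Lemma~\ref{theorem bianalytic Silver G0}. In $L[x]$ with $x$ $\Add(\kappa,1)$-generic, $E$ has $\kappa^+$ many classes, because $\kappa^+$ is preserved. A $\kappa$-perfect set of pairwise $E$-inequivalent elements would have at most one point outside $X$, hence would put a $\kappa$-perfect set, of size $2^\kappa$ and with no new elements, inside $X$. But a $\kappa$-perfect set in $L[x]$ contains new branches, since its tree has a branch generic-like through $x$; we can send $x$ through the perfect tree homeomorphically. Such a branch is not in $L$, a contradiction. So Silver's dichotomy fails for $E$ in $L[x]$, and $G_E$ is a counterexample.

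The main obstacle is the first step: choosing the codes so that $X$ is genuinely $\kappa$-Borel rather than merely coanalytic. I would handle it by copying the construction of $\kappa$-Borel wellorders in $L$ from the literature of L\"ucke–Schlicht, using condensation.
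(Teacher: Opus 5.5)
There is a genuine gap: the first assertion of the statement, the existence of the $\kappa$-Borel wellorder, is essentially unproved, and the route you sketch would not yield $\kappa$-Borel sets.

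You locate the difficulty in wellfoundedness, but for uncountable $\kappa$ this is harmless. A relation on $\kappa$ is ill-founded iff one of the $\kappa^{\omega}=\kappa$ many sequences in ${}^\omega\kappa$ descends through it. So the codes of wellfounded extensional models of $V=L$ already form a $\kappa$-Borel set. The real issue is that canonicity of the code must be checkable from $x$ alone.

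\begin{itemize}
\item A code that is ``the $L$-least code definable over the next admissible level'' refers to a structure that $x$ does not code. Verifying it therefore needs a quantifier over codes for that larger level.
\item Likewise, ``$x$ codes an initial segment of the model coded by $y$'' is naturally $\Sigma^1_1$, since it asserts the existence of an embedding.
\item Your inference ``both analytic and coanalytic, so $\kappa$-Borel'' is invalid. For uncountable $\kappa$ the Souslin--Kleene theorem fails, and the $\kappa$-Borel sets form a proper subclass of $\Delta^1_1$.
\end{itemize}

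The paper's key idea is to fix $n$ and use the unboundedly many $\alpha<\kappa^+$ for which there is a $\Sigma_n^{L_\alpha}$-definable surjection from $\kappa$ onto $L_\alpha$. The resulting code $x_\alpha$ is definable over the very structure $L_\alpha$ that it codes. Membership in $C$ then amounts to wellfoundedness plus conditions given by satisfaction of fixed formulas in the coded structure. Since $x_\alpha\in L_{\alpha+1}\subseteq L_\beta$ for $\alpha<\beta$, the order becomes ``$x_\alpha$ occurs as an element of the structure coded by $x_\beta$''. This is a union over $\zeta<\kappa$ of $\kappa$-Borel conditions, and the definition is absolute because it refers only to $L$-levels.

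The final step also has a gap. The $\kappa$-perfect tree $T$ and the homeomorphism $h\colon{}^\kappa2\to[T]$ lie in $L[x]$ and may depend on $x$, so nothing forces $h(x)\notin L$. For instance, $z\mapsto h(z\oplus x)$ is another homeomorphism onto $[T]$ in $L[x]$, and it sends $x$ to $h(0^\kappa)$. That every $\kappa$-perfect set in $L[x]$ has a non-constructible element is exactly what needs proof.

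Your idea can be repaired. Suppose $[T]\subseteq X$ in $L[x]$, and let $y$ be $\Add(\kappa,1)$-generic over $L[x]$. Since $X$ is $\kappa$-Borel, ``$[T]\subseteq X$'' is $\Pi^1_1$ in $T$. It therefore persists to $L[x][y]$ by $\Pi^1_1$-absoluteness for ${<}\kappa$-closed forcing. But $h(y)\in[T]\setminus L[x]$, while $X$ acquires no new elements in $L[x][y]$, a contradiction.

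The paper argues differently. Pulling the $\kappa$-Borel wellorder back along $h$ gives a $\kappa$-Borel wellorder of ${}^\kappa2$ in $L[x]$. All its proper initial segments have size at most $\kappa$ and are hence $\kappa$-meager, which contradicts the Kuratowski--Ulam theorem. This is where genuine $\kappa$-Borelness of the wellorder matters. The lemma giving the $\kappa$-Baire property covers only $\Delta^1_1$ sets with ground-model parameters, and $h$ is not in $L$.
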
 
\begin{proof} 
The following direct argument could be alternatively formulated using Jensen's analysis of $\Sigma_1$-definability and the fine structure of $L$. 
For unboundedly $\kappa\leq\alpha<\kappa^+$, a new subset of $\kappa$ appears in $L_{\alpha+1}\setminus L_\alpha$, and any such $\alpha$ has size $\kappa$ in $L_{\alpha+1}$ by acceptability of the $L$-hierarchy \cite[Theorem 1]{boolos1969degrees}. 
Then for some $n\in\omega$ and unboundedly many $\kappa\leq \alpha<\kappa^+$, there exists a 
$\Sigma_n^{L_\alpha}$-definable surjection from $\kappa$ onto $L_\alpha$ and thus a $\Sigma_n^{L_\alpha}$-definable relation on $\kappa\times \kappa$ coding $L_\alpha$ such that the subset $\kappa$ of $L_\alpha$ is coded into the even ordinals via the canonical pairing function on $\kappa$. 
Pick the $L$-least code $x_\alpha$ for $L_\alpha$ for each such $\alpha$ to obtain a $\Delta_{n+1}$-definable set $C\subseteq \tk$. 
Since $x_\beta$ codes $L_\beta$ and $x_\alpha\in L_\beta$ for $\alpha<\beta$, the set of pairs $(x_\alpha, x_\beta)$ with $\kappa\leq \alpha<\beta<\kappa^+$ is a $\Delta_{n+1}$ wellorder of ${}^\kappa 2$ with order type $\kappa^+$. 
Since the definition refers to $L$-levels, it is absolute to all outer models. 

Recall that there exists no $\kappa$-Borel wellorder on ${}^\kappa2$, as we have sketched in the proof of Proposition \ref{prewellorder counterexample}. 
Using Theorem \ref{theorem bianalytic Silver G0}, it therefore follows from the properties of $E$ that $G_E$ is a counterexample to the $G_0^\kappa$-dichotomy. 
\end{proof}

Conversely, the graphs $G_E$ in Propositions \ref{prewellorder counterexample} and \ref{counterexample wellorder in L} do not admit (even discontinuous) homomorphisms to $G_0^\kappa$ nor to $G_{\mathrm{wo}}$, since both of these graphs do not contain cliques of size $\kappa^+$. 
Hence the variant of the $G_0^\kappa$-dichotomy with $G_0^\kappa$ replaced by $G_E$ fails.




\todo[color=yellow!30]{Can we show that $G_0^\kappa{\upharpoonright}[T]$, with $T$ the $\kappa$-Kurepa tree above, does not cont. homomorph to this graph? Maybe a variant of the above argument works. But above we used that $G_{\mathrm{wo}}$ is closed.\\
}

\todo[color=yellow!30]{Is the previous counterexample incomparable with the graph $G_{\mathrm{wo}}$? 
$G$ cannot cont. homomorph to $G_{\mathrm{wo}}$, since otherwise it would contain a copy of a complete graph of size $\kappa^+$. \\ 
\ \\ 
Can $G_{\mathrm{wo}}$ homomorph to $G_E$ (for both $G_E$)? 
} 

\todo[color=yellow!30]{I think we can show that any subgraph of $G$, whose ``in the same connected component'' relation is $\kappa$-Borel, either does not homomorph to $G_0^\kappa$, or it is $\kappa$-Borel $\kappa$-colorable. 
The idea is that every $x$ can see the $L$-previous $y$ in its connected component and what they are mapped to. On each component the homomorphism is a coloring and we need to stitch them together in a $\kappa$-Borel way. 
\\ 
\ \\ 
It's not clear  how to do this for graphs that homomorph to $G$. If we can do it, we have shown there is no analogue for the $G_0$-dichotomy for any graph. 
}

\section{Open questions} 

The above results suggest to ask whether a different graph can play the role of $G_0^\kappa$. 
The graphs $G_{\mathrm{wo}}$, $G{\upharpoonright}[T]$ and $G_E$  cannot do this by Proposition \ref{prop graph of wellorders}, Theorem \ref{counterexample in L} and the discussion after Proposition \ref{counterexample wellorder in L}. 

\begin{question}
\label{question  replace G0} 
Is it consistent with $\kappa^{<\kappa}=\kappa$ that there a closed graph $G$ on ${}^\kappa 2$ that is least among non-$\kappa$-colorable closed graphs with respect to continuous homomorphisms? 
\end{question}

We further ask whether the $G_0^\kappa$-dichotomy for induced subgraphs of $G_0^\kappa$ holds after the Levy collapse of an inaccessible cardinals above $\kappa$ to $\kappa^+$. 
This is a test question towards a strategy for a negative answer to Question \ref{question  replace G0} in a $\kappa$-Levy model.


\bibliographystyle{alpha}
\bibliography{references}

\newcommand{\etalchar}[1]{$^{#1}$}
\begin{thebibliography}{FMM{\etalchar{+}}12}

\bibitem[BP69]{boolos1969degrees}
George Boolos and Hilary Putnam.
\newblock Degrees of unsolvability of constructible sets of integers.
\newblock {\em The Journal of Symbolic Logic}, 33(4):497--513, 1969.

\bibitem[FHK14]{MR3235820}
Sy-David Friedman, Tapani Hyttinen, and Vadim Kulikov.
\newblock Generalized descriptive set theory and classification theory.
\newblock {\em Memoirs of the American Mathematical Society}, 230(1081):vi+80,
  2014.

\bibitem[FK15]{friedman2015failures}
Sy-David Friedman and Vadim Kulikov.
\newblock Failures of the {S}ilver dichotomy in the generalized {B}aire space.
\newblock {\em The Journal of Symbolic Logic}, 80(2):661--670, 2015.

\bibitem[FMM{\etalchar{+}}12]{millersuslinspaces}
Sy~Friedman, Luca {Motto Ros}, Ben Miller, Philipp Schlicht, and Asger
  Törnquist.
\newblock Descriptive set theory in generalized {S}ouslin spaces.
\newblock Unpublished manuscript, 2012.

\bibitem[Fri14]{friedman2014consistency}
Sy-David Friedman.
\newblock Consistency of the {S}ilver dichotomy in generalised {B}aire space.
\newblock {\em Fundamenta Mathematicae}, 227:179--186, 2014.

\bibitem[Gao08]{gao2008invariant}
Su~Gao.
\newblock {\em Invariant descriptive set theory}.
\newblock CRC Press, 2008.

\bibitem[Kec95]{kechrisclassical}
Alexander~S. Kechris.
\newblock {\em Classical descriptive set theory}, volume 156 of {\em Graduate
  Texts in Mathematics}.
\newblock Springer-Verlag, New York, 1995.

\bibitem[KM20]{kechris2016descriptive}
Alexander~S. Kechris and Andrew~S. Marks.
\newblock Descriptive graph combinatorics.
\newblock {\em Book manuscript}, 2020.

\bibitem[KST99]{kechris1999borel}
Alexander~S. Kechris, Slawomir Solecki, and Stevo Todorcevic.
\newblock Borel chromatic numbers.
\newblock {\em Advances in Mathematics}, 141:1--44, 1999.

\bibitem[LMRS16]{lucke2016hurewicz}
Philipp L{\"u}cke, Luca Motto~Ros, and Philipp Schlicht.
\newblock The {H}urewicz dichotomy for generalized {B}aire spaces.
\newblock {\em Israel Journal of Mathematics}, 216(2):973--1022, 2016.

\bibitem[LS20]{lucke2020descriptive}
Philipp L{\"u}cke and Philipp Schlicht.
\newblock Descriptive properties of higher {K}urepa trees.
\newblock {\em To appear in Research Trends in Contemporary Logic (book
  chapter)}, 2020.
\newblock arXiv preprint arXiv:2010.11597.

\bibitem[L{\"u}c12]{lucke2012definability}
Philipp L{\"u}cke.
\newblock {$\Sigma^1_1$}-definability at uncountable regular cardinals.
\newblock {\em The Journal of Symbolic Logic}, 77(3):1011--1046, 2012.

\bibitem[Mil09]{millerparisone}
Benjamin Miller.
\newblock Forceless, ineffective, powerless proofs of descriptive dichotomy
  theorems. {L}ecture {I}: {S}ilver's theorem.
\newblock {\em Lecture notes}, 2009.

\bibitem[Mor23]{moreno2023shelah}
Miguel Moreno.
\newblock Shelah's main gap and the generalized {B}orel-reducibility.
\newblock {\em arXiv preprint arXiv:2308.07510}, 2023.

\bibitem[SS26]{schlicht2023open}
Philipp Schlicht and Dorottya Szir{\'a}ki.
\newblock The open dihypergraph dichotomy for generalized {B}aire spaces and
  its applications.
\newblock {\em Memoirs of the European Mathematical Society}, pages 1--166,
  2026.
\newblock To appear.

\end{thebibliography}

\end{document}